\documentclass[11pt]{amsart}
\usepackage[colorlinks=true,citecolor=magenta,linkcolor=magenta]{hyperref}
\usepackage{amssymb,amscd,amsmath,graphicx}
\usepackage{enumerate}
\usepackage{fullpage}

\numberwithin{equation}{section}
\hyphenation{semi-stable}

\newtheorem{theorem}{Theorem}[section]
\newtheorem{lemma}[theorem]{Lemma}
\newtheorem{proposition}[theorem]{Proposition}
\newtheorem{corollary}[theorem]{Corollary}

\theoremstyle{definition}
\newtheorem{definition}[theorem]{Definition}

\newtheorem{example}[theorem]{Example}
\newtheorem*{acknowledgment}{Acknowledgments}

\DeclareMathOperator{\Tor}{Tor}
\DeclareMathOperator{\reg}{reg}
\DeclareMathOperator{\depth}{depth}

\DeclareMathOperator{\Ass}{Ass}

\DeclareMathOperator{\Min}{Min}

\DeclareMathOperator{\chara}{char}

\DeclareMathOperator{\grade}{grade}

\newcommand{\ZZ}{{\mathbb Z}}
\newcommand{\NN}{{\mathbb N}}

\newcommand{\up}[1]{^{\tt #1}\!}

\def\kk{{\Bbbk}}

\def\pp{{\mathfrak p}}
\def\qq{{\mathfrak q}}


\begin{document}
	
	\title{Binomial expansion for saturated and \\ symbolic powers of sums of ideals}
	
	\author{Huy T\`ai H\`a}
	\address{Tulane University \\ Department of Mathematics \\
		6823 St. Charles Ave. \\ New Orleans, LA 70118, USA}
	\email{tha@tulane.edu}
	
	\author{A.V. Jayanthan}
	\address{Department of Mathematics, I.I.T. Madras, Chennai - 600036, INDIA}
	\email{jayanav@iitm.ac.in}
	
	\author{Arvind Kumar}
	\address{Department of Mathematics, Chennai Mathematical Institute, Siruseri
		Kelambakkam, Chennai, INDIA - 603103}
	\email{arvindkumar@cmi.ac.in}
	
	\author{Hop D. Nguyen}
	\address{Institute of Mathematics, VAST, 18 Hoang Quoc Viet, Cau Giay, 10307 Hanoi, VIETNAM}
	\email{ngdhop@gmail.com}
	
	\keywords{symbolic power, saturated power, sum of ideals, associated primes, binomial expansion}
	\subjclass[2010]{13C13, 13D07}
	
	\begin{abstract}
There are two different notions for symbolic powers of ideals existing in the literature, one defined in terms of associated primes, the other in terms of minimal primes. Elaborating on an idea known to Eisenbud, Herzog, Hibi, and Trung, we interpret both notions of symbolic powers as suitable saturations of the ordinary powers. We prove a binomial expansion formula for saturated powers of sums of ideals. This gives a uniform treatment to an array of existing and new results on both notions of symbolic powers of such sums: binomial expansion formulas, computations of depth and regularity, and criteria for the equality of ordinary and symbolic powers.
	\end{abstract}
	
\maketitle
	
	
\section{Introduction} \label{sec.intro}

Let $\kk$ be a field, let $A$ and $B$ be Noetherian $\kk$-algebras such that $R = A \otimes_\kk B$ is also Noetherian. Let $I \subseteq A$ and $J \subseteq B$ be ideals, and let $I+J\subseteq R$ denote the ideal $IR+JR$. The following binomial expansion for the symbolic powers of $(I+J)$ was given in \cite{HNTT}:
\begin{align}
	\label{eq.bin}
	(I+J)^{(s)} = \sum_{i=0}^s I^{(i)} J^{(s-i)}.
\end{align}	
This binomial expansion has been well received and seen many applications (cf. \cite{BHJT, EH2020, ERT2020, KKS2021, LS2021, MV2021, OR2019, SF2020, SW2019, W2018}).

In working with symbolic powers of ideals, it is often remarked that there are two different notions for symbolic powers existing in the literature, both being much investigated. This has led to many difficulties and sometimes confusions in using formula (\ref{eq.bin}). For instance, incorrect applications of (\ref{eq.bin}) were used in \cite{BHJT, HNTT, JKM2021} to study resurgence numbers, asymptotic resurgence numbers, and Waldschmidt constants. The aim of this paper is to prove that the binomial expansion (\ref{eq.bin}) provided in \cite{HNTT} is in fact valid for both known definitions of symbolic powers, allowing more applications and filling the gaps in \cite{BHJT, HNTT, JKM2021}. Particularly, our work shows that the consequences of formula (\ref{eq.bin}) in\cite[Lemmas 2.5, 3.3, and Theorem 2.7]{BHJT}, \cite[Corollary 3.8]{HNTT} and \cite[Theorems 3.6 and 3.9]{JKM2021} remain true as stated.

For an ideal $I$ in an arbitrary commutative ring $A$  and a positive integer $s$, the two existing notions of the \emph{$s$-th symbolic power} of $I$ are given by
$$\up{m}I^{(s)} = \bigcap_{\pp \in \Min_A(I)} (I^sA_\pp \cap A) \text{ and } \up{a}I^{(s)} = \bigcap_{\pp \in \Ass_A(I)} (I^sA_\pp \cap A).$$
The difference between these notions lies in the set of associated primes over which the intersection takes place. If $I$ has no embedded associated primes, i.e., $\Ass_A(I)=\Min_A(I)$, then the two notions are the same. This is the case for ideals of geometrical and combinatorial interest, including radical ideals (squarefree monomial ideals and prime ideals in particular). In general, for ideals with embedded associated primes, the two symbolic powers are different (Example \ref{ex.symbMsat}). In existing literature, there are no customarily distinctive notations for these notions of symbolic powers. In this paper, to distinguish the two, we shall use superscripts ${\tt m}$ and ${\tt a}$ as above. The binomial expansion given in \cite{HNTT} was for $\up{m}I^{(s)}$, while for applications in \cite{BHJT, HNTT, JKM2021} it was $\up{a}I^{(s)}$ that was examined.

It turns out that both of these notions of symbolic powers fall under a more general umbrella, namely, they both can be realized as the saturated powers of $I$ with respect to appropriate ideals; see Lemmas   \ref{lem.symbSat_min} and \ref{lem.symbSat_ass}. We shall address this much more general notion of saturated powers.

\begin{definition}
	\label{def.satpower}
	Let $I, K$ be ideals in a commutative ring $A$, and let $s$ be a positive integer. The \emph{$s$-th saturated power} of $I$ \emph{with respect to $K$} is defined to be
	$$I^{(s)}_K := I^s : K^\infty = \bigcup_{t \ge 1} (I^s : K^t).$$
\end{definition}
The fact that symbolic powers can be realized as saturation was known to several authors, for example Herzog-Hibi-Trung \cite[Section 3]{HHT} and Eisenbud \cite[Proposition 3.13]{Eis} (if only in a disguised form). We illustrate the fertility of this idea in the present paper, and derive various results for both notions of symbolic powers under a common framework. Our main result establishes a binomial expansion of the form (\ref{eq.bin}) for saturated powers of $(I+J)$, where again $A, B$ are Noetherian $\kk$-algebras such that $A\otimes_\kk B$ is also Noetherian, and $I \subseteq A$ and $J \subseteq B$ are ideals. Specifically, we prove the following theorem.

\medskip

\noindent\textbf{Theorem \ref{binomial_expansion}.} Let $I,K \subseteq A$ and $J,L \subseteq B$ be ideals. Then, for any $s \in \NN$, we have
\begin{align} \label{eq.binsat}
	(I+J)^{(s)}_{KL}=\sum_{i=0}^s I^{(i)}_K J^{(s-i)}_L .
\end{align}

A direct application of Theorem \ref{binomial_expansion} particularly shows that the binomial expansion for symbolic powers of $(I+J)$ in (\ref{eq.bin}) is valid for both the known notions, using minimal and associated primes, of symbolic powers; see Theorem \ref{thm.symbBIN}. Theorem \ref{binomial_expansion} also allows us to derive formulas to compute the depth and the regularity of saturated powers of $I+J$ in terms of those of $I$ and $J$; see Theorem \ref{thm_depth_reg_saturatedpowers}. These formulas generalize those given in \cite{HNTT} and, at the same time, exhibit that the formulas in \cite{HNTT} are valid for symbolic powers defined by all associated primes as well; see Corollary \ref{cor_depth_reg_ass-symbpower}. Another application of Theorem \ref{binomial_expansion} is a criterion for the equality of ordinary and ``associated'' symbolic powers of $I+J$ (Corollary \ref{cor.equality}). There is an analogous result for ``minimal'' symbolic powers (\cite[Corollary 3.5]{HNTT}) whose proof cannot be adapted to handle ``associated'' symbolic powers.

Let $Q_s$ denote the right hand side of (\ref{eq.binsat}). To prove Theorem \ref{binomial_expansion}, we establish both containments $(I+J)^{(s)}_{KL} \supseteq Q_s$ and $(I+J)^{(s)}_{KL} \subseteq Q_s$.  The first containment, namely, $(I+J)^{(s)}_{KL} \supseteq Q_s$, is achieved by directly showing that $I^{(i)}_K J^{(s-i)}_L \subseteq (I+J)^{(s)}_{KL}$ for any $0\le i\le s$; see Proposition \ref{prop_sub}. The second containment, namely, $(I+J)^{(s)}_{KL} \subseteq Q_s$, on the other hand, is more involved and does not follow from similar lines of arguments as those given in \cite{HNTT}. To derive at this later containment, we examine colon ideals of the form $(I+J)^s : (KL)^t$ and make use of the binomial expansion for the usual power $(I+J)^s$.

The paper is outlined as follows. In the next section, we collect some basic facts about saturated powers and prove that, with respect to appropriate ideals, symbolic powers are saturated powers. These are done in Lemmas \ref{lem.symbSat_min} and \ref{lem.symbSat_ass}. We also recall a few important results about associated primes of powers of sums of ideals that will be used later. In Section \ref{sec.bin}, we prove our main result, Theorem \ref{binomial_expansion}, establishing a binomial expansion for a general saturated power of $(I+J)$. This yields a criterion for the equality of ordinary and certain saturated powers of $I+J$; see Theorem \ref{thm.equalityConverse}. In Section \ref{sec.applications}, we specify Theorem \ref{binomial_expansion} to symbolic powers and show that the binomial expansion in (\ref{eq.bin}) holds true for both definitions of symbolic powers. This is done in Theorem \ref{thm.symbBIN}. We also apply Theorem \ref{binomial_expansion} to get formulas for the depth and regularity of saturated powers of $(I+J)$. This is done in Theorem \ref{thm_depth_reg_saturatedpowers}.

\begin{acknowledgment} The first named author acknowledges supports from Louisiana Board of Regents and the Simons Foundation. The third named author thanks the Infosys foundation, and Sciences and Engineering Research Board (PDF/2020/001436), India for the financial support. The last named author thanks the Vietnam Institute for Advanced Study in Mathematics (VIASM) for its hospitality and generous support. Many of our examples are computed using Macaulay 2 \cite{GS} package.
\end{acknowledgment}	

\section{Saturated and symbolic powers of ideals} \label{sec.powers}
For standard notations and terminology of commutative algebra, we refer the interested reader to \cite{BH, Eis}. The recent survey \cite{DDS+} discusses various topics on symbolic powers defined in terms of all associated primes.

In this section, we collect basic properties of saturated powers of an ideal and show that its symbolic powers are also saturated powers with respect to appropriate ideals. We also recall a few results on associated primes of powers of sums of ideals that will be used in later sections. Throughout this section, $A$ will denote a Noetherian commutative ring.

We start by recalling from Definition \ref{def.satpower} that for ideals $I, K \subseteq A$ and a positive integer $s$, the $s$-th saturated power of $I$ with respect to $K$ is given by
$$I^{(s)}_K = I^s : K^\infty.$$
In other words, if $I^s = \bigcap_{\pp \in \Ass_A(I^s)} Q(\pp)$ is an irredundant primary decomposition of $I^s$, where $Q(\pp)$ is the $\pp$-primary component of $I^s$, then
\begin{equation}
\label{eq_satpower_andprimarydecomp}
I^{(s)}_K = \bigcap_{\pp \in \Ass_A(I^s), \  K \not\subseteq \pp} Q(\pp). 
\end{equation}

We proceed in showing that symbolic powers are indeed saturated powers, elaborating on an idea known to Herzog-Hibi-Trung \cite[Section 3]{HHT} and Eisenbud \cite[Proposition 3.13]{Eis} (in an indirect form). For an ideal $I \subseteq A$, set
$\Ass_A^*(I) := \bigcup_{n=1}^\infty \Ass_A(I^n),$ which is a finite set, by the proof of \cite[Theorem 2.11]{Rat1976}.
Recall that
$$
\up{m}I^{(s)} = \bigcap_{\pp \in \Min_A(I)}(I^s A_\pp \cap A).
$$
Note that $\bigcup_{\pp \in \Min_A(I)} \pp$ consists of zero-divisors of $A/\sqrt{I}$. By prime avoidance, it is easy to see that for $y \not\in \bigcup_{\pp \in \Min_A(I)} \pp$, we have
\begin{align} \label{eq.colon}
	I^s : y \subseteq \ \up{m}I^{(s)}.
\end{align}

In what follows we employ the usual convention that an empty intersection of ideals in $A$ is the ring $A$.

\begin{lemma}[``Minimal" symbolic powers as saturation, cf. {\cite[Section 3]{HHT}}]
	\label{lem.symbSat_min}
	Let $I \subseteq A$ be any ideal and let $s$ be a positive integer. Set
	$$K = \bigcap_{\pp \in \Ass_A^*(I) \setminus \Min_A(I)} \pp \text{ and } K_s = \bigcap_{\pp \in \Ass_A(I^s) \setminus \Min_A(I)} \pp.$$
	Let $x \in K$ be any element that is regular on $A/\sqrt{I}$ if $\Ass_A^*(I) \setminus \Min_A(I)\neq \emptyset$, and $x = 1$ otherwise. Then,
	$$\up{m}I^{(s)} = I^{(s)}_K = I^{(s)}_{K_s} = I^{(s)}_{(x)}.$$
	In particular, given an arbitrary irredundant primary decomposition of $I^s$,  $\up{m}I^{(s)}$ is the intersection of its components that are primary to elements of $\Min_A(I)$.
\end{lemma}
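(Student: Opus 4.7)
My plan is to identify all four ideals with the single intersection $\bigcap_{\pp \in \Min_A(I)} Q(\pp)$, where $I^s = \bigcap_{\pp \in \Ass_A(I^s)} Q(\pp)$ is a fixed irredundant primary decomposition. For the three saturated powers I will apply equation \eqref{eq_satpower_andprimarydecomp} and show that the set $\{\pp \in \Ass_A(I^s) : L \not\subseteq \pp\}$ equals $\Min_A(I)$ when $L$ is the saturating ideal. For $\up{m}I^{(s)}$ itself I will invoke the standard fact that for $\pp \in \Min_A(I) = \Min_A(I^s)$ the $\pp$-primary component of $I^s$ is $Q(\pp) = I^s A_\pp \cap A$, which identifies $\up{m}I^{(s)}$ with the same intersection of minimal-prime components (and since these isolated components are unique, the equality is independent of the chosen decomposition, yielding the ``in particular'' assertion at the end).

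The technical heart is a single prime-avoidance fact: every $\qq \in \Ass_A^*(I) \setminus \Min_A(I)$ satisfies $\qq \not\subseteq \pp$ for each $\pp \in \Min_A(I)$. Indeed $\qq \supseteq I$ (since $\qq$ is associated to some $I^n$), so $\qq \subseteq \pp$ would force $\qq = \pp$ by minimality of $\pp$ over $I$, contradicting $\qq \notin \Min_A(I)$. By finite prime avoidance, any ideal $L$ contained in every member of $\Ass_A(I^s) \setminus \Min_A(I)$ then satisfies $L \not\subseteq \pp$ for all $\pp \in \Min_A(I)$, whence, using $\Min_A(I) \subseteq \Ass_A(I^s)$,
$$\{\pp \in \Ass_A(I^s) : L \not\subseteq \pp\} = \Min_A(I).$$
Both $L = K_s$ (by definition) and $L = K$ (since $\Ass_A(I^s) \setminus \Min_A(I) \subseteq \Ass_A^*(I) \setminus \Min_A(I)$) meet this hypothesis, so \eqref{eq_satpower_andprimarydecomp} immediately gives $I^{(s)}_K = I^{(s)}_{K_s} = \bigcap_{\pp \in \Min_A(I)} Q(\pp)$, which coincides with $\up{m}I^{(s)}$ by the first paragraph.

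To handle $I^{(s)}_{(x)}$, the degenerate case $\Ass_A^*(I) \setminus \Min_A(I) = \emptyset$ is immediate: $K = A$, $\Ass_A(I^s) = \Min_A(I)$, and all four ideals reduce to $I^s$ with $x = 1$. Otherwise, the existence of an $x \in K$ that is regular on $A/\sqrt{I}$ is precisely the assertion $K \not\subseteq \bigcup_{\pp \in \Min_A(I)} \pp$, which follows from $K \not\subseteq \pp$ for each such $\pp$ (the previous paragraph with $L = K$) combined with prime avoidance. Fixing such $x$, we have $(x) \subseteq K \subseteq \qq$ for every $\qq \in \Ass_A(I^s) \setminus \Min_A(I)$, while $(x) \not\subseteq \pp$ for each $\pp \in \Min_A(I)$ by regularity on $A/\sqrt{I}$, so applying the display above with $L = (x)$ completes the identification. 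I anticipate no serious obstacle; the only delicate point is the uniform bookkeeping ensuring prime avoidance applies to all three saturating ideals and the two flavors of prime avoidance (for the ideal inclusion and for the union-of-primes inclusion) are deployed correctly.
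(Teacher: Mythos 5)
Your approach is genuinely different from the paper's and, modulo one misstated step, is correct. The paper proves the chain $I^{(s)}_{K_s} \subseteq I^{(s)}_K \subseteq I^{(s)}_{(x)} \subseteq \up{m}I^{(s)} \subseteq I^{(s)}_{K_s}$ by element-chasing: the inclusion $I^{(s)}_{(x)} \subseteq \up{m}I^{(s)}$ uses (\ref{eq.colon}), and the final inclusion uses Noetherianness to find a power $q$ with $K_s^q \subseteq I^s : v$ for $v \in \up{m}I^{(s)} \setminus I^s$. You instead argue entirely at the level of primary decompositions: for each saturating ideal $L \in \{K, K_s, (x)\}$, you compute the index set in \eqref{eq_satpower_andprimarydecomp} and show it equals $\Min_A(I)$, then identify $\up{m}I^{(s)}$ with the same intersection of isolated components via the standard fact $Q(\pp) = I^s A_\pp \cap A$ for $\pp \in \Min_A(I) = \Min_A(I^s)$. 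This is a legitimate, arguably cleaner route, and it delivers the ``in particular'' clause for free rather than as a final corollary.

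However, the pivotal intermediate claim is misstated: ``By finite prime avoidance, any ideal $L$ contained in every member of $\Ass_A(I^s) \setminus \Min_A(I)$ then satisfies $L \not\subseteq \pp$ for all $\pp \in \Min_A(I)$'' is false as a general statement --- the zero ideal is contained in every member and is contained in every $\pp$. Being contained in all the embedded primes of $I^s$ is a condition monotone in the wrong direction and cannot by itself force $L \not\subseteq \pp$. What you actually need, and what is true for $L = K$ and $L = K_s$ specifically, is that each is a \emph{finite intersection} of primes $\qq$ with $\qq \not\subseteq \pp$ (which you correctly established); since $\pp$ is prime, $\prod \qq \not\subseteq \pp$, and therefore $K, K_s \supseteq \prod \qq$ are not contained in $\pp$. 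This is a primality argument, not prime avoidance. (Prime avoidance \emph{is} correctly used later when you convert ``$K \not\subseteq \pp$ for each $\pp \in \Min_A(I)$'' into ``$K \not\subseteq \bigcup_{\pp \in \Min_A(I)} \pp$.'') Replacing the erroneous lemma with this primality argument, applied to $K$ and $K_s$ separately, closes the gap; the treatment of $(x)$ via regularity on $A/\sqrt{I}$ is already correct and does not depend on the faulty claim.
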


\begin{proof} 
	Observe that $(x) \subseteq K \subseteq K_s$. Thus,  $ I^{(s)}_{K_s} \subseteq I^{(s)}_K \subseteq I^{(s)}_{(x)}. $  To establish the desired equality, it suffices to show that
	$$I^{(s)}_{(x)} \subseteq \ \up{m}I^{(s)} \subseteq I^{(s)}_{K_s}.$$
	
Let $u \in I^{(s)}_{(x)}$ be any element. Then, for some positive integer $t$, we have that $u \in I^s : x^t.$ This, together with (\ref{eq.colon}), implies that $u \in \ \up{m}I^{(s)}$ as $x^t \notin \bigcup_{\pp \in \Min_A(I)} \pp .$ Therefore, $I^{(s)}_{(x)} \subseteq \ \up{m}I^{(s)}$.

Now, let $v \in \ \up{m}I^{(s)} \setminus I^s$ be any element, if exists.  By the definition of symbolic powers and the prime avoidance, we have $I^s : v \not \subseteq  \bigcup_{\pp \in \Min_A(I)} \pp$. This, since $\Ass_A(I^s:v) \subseteq \Ass_A(I^s)$, implies that
	\[
	\Ass_A(I^s: v) \subseteq \{\pp ~\big|~ \pp \in \Ass_A(I^s) \setminus \Min_A(I)\}.
	\]
Since $(I^s : v)$ is finitely generated, there exists a positive integer $q$ such that  $(I^s:v)$ contains  $\left(\bigcap_{\pp \in \Ass_A(I^s:v)} \pp\right)^q$. Thus,
	\[
	K_s^q =  \left(\mathop{\bigcap_{\pp \in \Ass_A(I^s) \setminus \Min_A(I)}} \pp\right)^q  \subseteq \left(\bigcap_{\pp \in \Ass_A(I^s:v)} \pp\right)^q \subseteq I^s: v.
	\]
	This implies that  $v \in I^s : K_s^{\infty}$. Hence, $\up{m}I^{(s)} \subseteq I^{(s)}_{K_s}$. The desired equality follow.
	
	Finally, the last assertion follows from the established equality $\up{m}I^{(s)} = I^{(s)}_{K_s}$ and Equation \eqref{eq_satpower_andprimarydecomp}.
\end{proof}

Recall also that
$\up{a}I^{(s)} = \bigcap_{\pp \in \Ass_A(I)} (I^s A_\pp \cap A).$
Note that $\bigcup_{\pp \in \Ass_A(I)} \pp$ consists of zero-divisors of $A/I$. Similar to the observation that led to (\ref{eq.colon}), it can be seen that with this notion of symbolic powers, for any $y \not\in \bigcup_{\pp \in \Ass_A(I)} \pp$, we have
\begin{align}
	\label{eq.colonAss}
	I^s : y \subseteq \ \up{a}I^{(s)}.
\end{align}

\begin{lemma}[``Associated" symbolic powers as saturation]
	\label{lem.symbSat_ass}
	Let $I \subseteq A$ be any ideal and let $s$ be a positive integer. Set
	$$K = \bigcap_{\substack{\pp \in \Ass_A^*(I) \\ \grade(\pp, A/I) \ge 1}} \pp \text{ and } K_s = \bigcap_{\substack{\pp \in \Ass_A(I^s) \\ \grade(\pp,A/I) \ge 1}} \pp.$$
	Let $x \in K$ be any element that is regular on $A/I$ if $\{\pp \in \Ass_A^*(I) \mid \grade(\pp,A/I)\ge 1\}\neq \emptyset$, and $x = 1$ otherwise. Then,
	$$\up{a}I^{(s)} = I^{(s)}_K = I^{(s)}_{K_s} = I^{(s)}_{(x)}.$$
	In particular, given an arbitrary irredundant primary decomposition of $I^s$,  $\up{a}I^{(s)}$ is the intersection of its components that are primary to ideals $\pp\in \Ass_A(I^s)$ such that $\grade(\pp,A/I)=0$, i.e., $\pp$ is contained in an element of $\Ass_A(I)$.
\end{lemma}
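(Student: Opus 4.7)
The plan is to mimic the proof of Lemma \ref{lem.symbSat_min}, trading $\Min_A(I)$ for the set $\{\pp \in \Ass_A^*(I) \mid \grade(\pp,A/I)\ge 1\}$ and using (\ref{eq.colonAss}) in place of (\ref{eq.colon}). From $(x) \subseteq K \subseteq K_s$ one obtains the free containments
\[
I^{(s)}_{K_s} \subseteq I^{(s)}_K \subseteq I^{(s)}_{(x)},
\]
so the substance of the lemma reduces to verifying $I^{(s)}_{(x)} \subseteq \up{a}I^{(s)}$ and $\up{a}I^{(s)} \subseteq I^{(s)}_{K_s}$.

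For the first inclusion I would pick $u \in I^{(s)}_{(x)}$, find $t$ with $x^t u \in I^s$, and note that $x$, and hence $x^t$, being $A/I$-regular means $x^t \notin \bigcup_{\pp \in \Ass_A(I)} \pp$. Then (\ref{eq.colonAss}) applied with $y = x^t$ places $u$ in $\up{a}I^{(s)}$.

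For the second inclusion, take $v \in \up{a}I^{(s)}\setminus I^s$. For every $\pp \in \Ass_A(I)$ the membership $v/1 \in I^s A_\pp$ supplies some $s_\pp \in A \setminus \pp$ with $s_\pp v \in I^s$, so $(I^s : v) \not\subseteq \pp$. The key step is then to argue that every $\qq \in \Ass_A(I^s:v)$ satisfies $\grade(\qq,A/I)\ge 1$. The short exact sequence $0 \to A/(I^s:v) \xrightarrow{\cdot v} A/I^s$ gives $\Ass_A(I^s:v) \subseteq \Ass_A(I^s)$; moreover, if $\grade(\qq,A/I)=0$ then $\qq$ consists of zero-divisors of $A/I$, and finite prime avoidance forces $\qq \subseteq \pp$ for some $\pp \in \Ass_A(I)$. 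Since $\qq \supseteq I^s:v$ (because $\qq$ is associated to $A/(I^s:v)$), this would yield $I^s:v \subseteq \pp$, contradicting the previous paragraph. Consequently
\[
K_s \subseteq \bigcap_{\qq \in \Ass_A(I^s:v)} \qq \subseteq \sqrt{I^s:v},
\]
and Noetherianity provides some power $K_s^q \subseteq I^s:v$, whence $v \in I^{(s)}_{K_s}$.

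The ``in particular'' clause then follows by combining the established equality $\up{a}I^{(s)} = I^{(s)}_{K_s}$ with the primary-decomposition formula (\ref{eq_satpower_andprimarydecomp}): a component $Q(\qq)$ of $I^s$ survives the $K_s$-saturation iff $K_s \not\subseteq \qq$, which for $\qq \in \Ass_A(I^s)$ is equivalent — by the standard ``prime containing an intersection of finitely many primes'' argument applied to $K_s$ — to $\grade(\qq,A/I)=0$, i.e.\ to $\qq$ being contained in some element of $\Ass_A(I)$. I expect the main obstacle to be precisely this recurring grade-zero versus containment-in-an-associated-prime translation, which has to be invoked cleanly in three separate places (regularity of $x$ on $A/I$, the exclusion of $\grade(\qq,A/I)=0$ in the $\Ass_A(I^s:v)$ step, and the final primary-component identification).
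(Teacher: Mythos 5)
Your proof is correct and follows essentially the same approach as the paper's, which explicitly defers to the proof of Lemma \ref{lem.symbSat_min} with $\Min_A(I)$ replaced by $\{\pp \in \Ass_A^*(I) : \grade(\pp, A/I)\ge 1\}$ and (\ref{eq.colon}) replaced by (\ref{eq.colonAss}). The only cosmetic difference is that you verify $\grade(\qq, A/I)\ge 1$ for $\qq \in \Ass_A(I^s:v)$ by a contradiction-via-prime-avoidance argument, whereas the paper's version amounts to picking a regular element of $I^s:v$ on $A/I$ and noting it lies in each such $\qq$; both are valid and amount to the same idea.
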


\begin{proof}
As in Lemma \ref{lem.symbSat_min}, it is easy to see that $I^{(s)}_{K_s} \subseteq I^{(s)}_K \subseteq I^{(s)}_{(x)}$, and it suffices to prove the following inclusions:
	$$I^{(s)}_{(x)} \subseteq \ \up{a}I^{(s)} \subseteq I^{(s)}_{K_s}.$$
	
The first inclusion follows by (\ref{eq.colonAss}) and exactly the same argument as in Lemma \ref{lem.symbSat_min}. The second inclusion is also established by a similar line of arguments as that in Lemma \ref{lem.symbSat_min}. Specifically, consider any element $v \in \ \up{a}I^{(s)} \setminus I^s$ if exists. Again, by the definition and prime avoidance, we have $I^s : v \not \subseteq \bigcup_{\pp \in \Ass_A(I)} \pp.$ This, since $\Ass_A(I^s :v) \subseteq \Ass_A(I^s)$, implies that
$$\Ass_A(I^s :v) \subseteq \{\pp ~\big|~ \pp \in \Ass_A(I^s) \text{ and } \grade(\pp, A/I) \ge 1\}.$$
The argument continues exactly line by line as that in Lemma \ref{lem.symbSat_min}, replacing the phrase ``$\pp \in \Ass_A(I^s)\setminus \Min_A(I)$'' by ``$\pp\in \Ass_A(I^s) \text{ and } \grade(\pp, A/I) \ge 1$''.

Finally, the last assertion follows from the established equality $\up{a}I^{(s)} = I^{(s)}_{K_s}$ and Equation \eqref{eq_satpower_andprimarydecomp}.
\end{proof}

\begin{example}
\label{ex.symbMsat}
Let $I = (a^2, ab) = (a)(a,b) \subseteq A = \kk[a,b]$. Then, for $s \in \NN$, $I^s = (a^s)(a,b)^s.$ This implies that $\Ass_A^*(I) = \{(a), (a,b)\}$ and $\Min_A(I) = \{(a)\}$. Particularly, we have
$$K = K_s = (a,b),$$
and $x$ can be chosen to be $x = b$. Now,
$$I^{(s)}_K = I^{(s)}_{K_s} = I^{(s)}_{(b)} = \ \up{m}I^{(s)} = (a^s).$$

Since the set $\{\pp\in \Ass_A(I^s) \text{ and } \grade(\pp, A/I) \ge 1\}$ is empty, we see that $\up{a}I^{(s)}=I^s$ for all $s\ge 1$. Hence, the two known definitions give different $s$-th symbolic powers of $I$ for all $s \ge 1$.
\end{example}

\begin{example}
	\label{ex.symbAsat}
	Let $I = (u^5, u^4v, uv^4, v^5, u^3v^3, u^3v^2w+u^2v^3z) \subseteq A = \kk[u,v,w,z].$ Note that $I$ is primary to $(u,v)$. Direct computation shows that
	$$I^2 = (u^{10}, u^9v, u^8v^2, u^6v^4, u^5v^5, u^4v^6,u^2v^8,uv^9,v^{10},u^7v^3w, u^3v^7w, u^7v^3z, u^3v^7z),$$
	and $I^s = (u,v)^{5s}$ for $s \ge 3$. Thus,
	$\Ass_A^*(I) = \{(u,v),(u,v,w,z)\}.$
	Particularly, $K = (u,v,w,z)$ and we can chose $x = w$. We also have $K_2 = K$ and $K_s = (1)$ for $s \not= 2$. Therefore, for $s \ge 2$,
	$$I^{(s)}_K = I^{(s)}_{K_s} = I^{(s)}_{(x)} = \ \up{a}I^s = (u,v)^{5s}.$$
\end{example}

We end the section by paving the way to  the main result in Section \ref{sec.applications}, with Lemmas \ref{lem_satpower_min_symbpower} and \ref{lem_satpower_ass_symbpower}. For this, it is necessary to recall a few facts on associated primes of powers of sums, and the behavior of grade with respect to certain tensor products.

\begin{lemma}
	\label{lem_Ass_tensor}
	Let $M$ and $N$ be nonzero finitely generated modules over $A$ and $B$, respectively. Then,
	\begin{align*}
		\Ass_R(M \otimes_\kk N) & = \bigcup_{\substack{\pp \in \Ass_A(M) \\ \qq \in \Ass_B(N)}} \Min_R \left(\pp + \qq\right), \text{ and } \\
		\Min_R(M \otimes_\kk N) & = \bigcup_{\substack{\pp \in \Min_A(M) \\ \qq \in \Min_B(N)}} \Min_R \left(\pp + \qq\right).
	\end{align*}
	More precisely, $P \in \Ass_R(M \otimes_\kk N)$ (respectively, $\Min_R(M \otimes_\kk N)$) if and only if $\pp = P \cap A \in \Ass_A(M)$ (respectively, $\Min_A(M)$), $\qq = P \cap B \in \Ass_B(N)$ (respectively, $\Min_B(N)$), and $P \in \Min_R\left(\pp+\qq\right).$
\end{lemma}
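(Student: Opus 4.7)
The plan is to combine two applications of the standard flat base change theorem for associated primes with a structural analysis of the tensor product $(A/\pp) \otimes_\kk (B/\qq)$.

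\emph{Step 1 (Reduction via flat base change).} The inclusion $A \hookrightarrow R = A \otimes_\kk B$ is faithfully flat: $R$ is a free $A$-module since $B$ is a free $\kk$-vector space. More generally, $A \otimes_\kk N$ is a free $A$-module that also carries a compatible $R$-module structure. Applying the flat base change formula (valid for any finitely generated $A$-module $M$ and any $R$-module $N'$ that is $A$-flat)
\[
\Ass_R\bigl(M \otimes_A N'\bigr) \;=\; \bigcup_{\pp \in \Ass_A M}\Ass_R\bigl(N'/\pp N'\bigr),
\]
to $N' = A \otimes_\kk N$ gives $\Ass_R(M \otimes_\kk N) = \bigcup_{\pp \in \Ass_A M}\Ass_R((A/\pp) \otimes_\kk N)$. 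Running the symmetric argument over the flat extension $B \hookrightarrow R$ for each summand yields
\[
\Ass_R(M \otimes_\kk N) \;=\; \bigcup_{\pp,\qq}\Ass_R\bigl(R/(\pp+\qq)\bigr),
\]
with $\pp \in \Ass_A M$ and $\qq \in \Ass_B N$.

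\emph{Step 2 (No embedded primes on $R/(\pp+\qq)$).} The key claim is $\Ass_R(R/(\pp+\qq)) = \Min_R(\pp+\qq)$ for any primes $\pp \subset A$, $\qq \subset B$, i.e., that the tensor product $(A/\pp) \otimes_\kk (B/\qq)$ of two integral $\kk$-algebras has no embedded primes. Writing $D_1 = A/\pp$, $D_2 = B/\qq$, and $F_i = \mathrm{Frac}(D_i)$, applying flat base change once more to the flat maps $D_i \to D_1 \otimes_\kk D_2$ (starting from the zero ideal of the domain $D_i$) forces $P \cap A = \pp$ and $P \cap B = \qq$ for every $P \in \Ass_R(D_1 \otimes_\kk D_2)$. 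Localizing at the multiplicative set $S = (D_1 \setminus 0)(D_2 \setminus 0)$, both $\Ass$-primes and $\Min$-primes of $D_1 \otimes_\kk D_2$ correspond bijectively to those of $T := F_1 \otimes_\kk F_2$, and the claim reduces to showing that the Noetherian ring $T$ has no embedded primes --- a classical fact about tensor products of fields over $\kk$.

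\emph{Step 3 (Minimal primes version and the bijective characterization).} The parallel formula $\Min_R(M \otimes_\kk N) = \bigcup_{\pp \in \Min_A M,\, \qq \in \Min_B N}\Min_R(\pp+\qq)$ is established by the same plan, replacing Ass by Min throughout and using the Min-analogue of flat base change (a consequence of going-down for flat maps together with the support identity $\mathrm{Supp}_R(M \otimes_A R) = \varphi^{-1}(\mathrm{Supp}_A M)$). The ``if and only if'' characterization is read off directly from Step 2: the contractions $P \cap A = \pp$ and $P \cap B = \qq$ were established there, and membership $P \in \Min_R(\pp+\qq)$ is precisely the output of the main formula. The main obstacle is the classical claim in Step 2 that $F_1 \otimes_\kk F_2$ has no embedded associated primes; this is subtle when $\kk$ is non-perfect (the tensor product may be non-reduced), but follows in general from the observation that every prime in such a tensor product lies over $(0)$ in both factors, so that a primary decomposition of $(0)$ can be assembled using only minimal primary components.
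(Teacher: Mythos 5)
The paper offers no direct proof of this lemma; it simply cites [HNTT, Theorem 2.5] and its proof. Your outline is a reasonable reconstruction of what such a proof must do: flat base change to pass from modules to primes (Step~1), further flat base change plus localization at $S=(D_1\setminus 0)(D_2\setminus 0)$ to reduce $\Ass_R(R/(\pp+\qq))$ to the field tensor $T=F_1\otimes_\kk F_2$ (Step~2), and then the Min-version and the bijective characterization (Step~3), which do indeed follow from the Ass-version together with the contraction facts $P\cap A=\pp$, $P\cap B=\qq$ for $P\in\Ass_R(R/(\pp+\qq))$.

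The genuine gap is at the crux of Step~2. The assertion that the Noetherian ring $T=F_1\otimes_\kk F_2$ has no embedded primes is precisely the hard content of the lemma, and the justification you give --- that every prime of $T$ lies over $(0)$ in both factors, ``so that a primary decomposition of $(0)$ can be assembled using only minimal primary components'' --- is not an argument. That every prime of $T$ contracts to $(0)$ in $F_1$ and in $F_2$ is vacuously true because $F_1,F_2$ are fields, and this property has no bearing on whether $(0)\subset T$ is unmixed; the conclusion is simply restated, not derived. When $\kk$ is perfect, $T$ is reduced (hence unmixed) and the claim is easy. For general $\kk$, however, $T$ can be non-reduced and one must invoke something substantive, namely Grothendieck's theorem (EGA~IV, 6.7.1) that a Noetherian tensor product of field extensions of $\kk$ is a complete intersection ring, hence Cohen--Macaulay, hence unmixed. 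Without an input of this strength the argument does not close; supplying it is exactly what the citation to [HNTT, Theorem 2.5] is doing.
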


\begin{proof}
	The assertion follows from \cite[Theorem 2.5]{HNTT} and its proof.
\end{proof}

\begin{lemma}
\label{lem_ass_powers}
Let $I$ and $J$ be nonzero proper ideals of $A$ and $B$, respectively. Then for any $s \in \NN$, we have
 \begin{align*}
 \bigcup_{i=1}^s \mathop{\bigcup_{\pp \in \Ass_A(I^{i-1}/I^i)}}_{\qq \in \Ass_B(J^{s-i}/J^{s-i+1})} \Min_R(\pp+\qq)  &\subseteq  \Ass_R ((I+J)^s), \text{ and }\\
\Ass_R ((I+J)^s)  &\subseteq  \bigcup_{i=1}^s \mathop{\bigcup_{\pp \in \Ass_A(I^i)}}_{\qq \in \Ass_B(J^{s-i}/J^{s-i+1})} \Min_R(\pp+\qq).
 	\end{align*}
More precisely, if $P \in \Ass_R((I+J)^s)$ and $\pp= P \cap A, \qq = P \cap B$, then there exists $1\le i\le  s$ such that  $\pp \in \Ass_A(I^i)$, $\qq \in \Ass_B(J^{s-i}/J^{s-i+1})$, and $P \in \Min_R(\pp+\qq)$.
\end{lemma}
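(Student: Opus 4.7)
The plan is to derive both containments from a single $J$-adic-type filtration of $R/(I+J)^s$, whose successive quotients are tensor products of the form $(A/I^i) \otimes_\kk (J^{s-i}/J^{s-i+1})$, combined with Lemma~\ref{lem_Ass_tensor}. The main technical tool is a direct sum decomposition of $R$ arising from $\kk$-flatness: fix $\kk$-vector space complements $V_k$ of $I^{k+1}$ in $I^k$ and $W_l$ of $J^{l+1}$ in $J^l$, so that $A = \bigoplus_k V_k$, $B = \bigoplus_l W_l$, and hence $R = \bigoplus_{k,l} V_k \otimes_\kk W_l$. In this bigrading one has $I^a J^b = \bigoplus_{k \geq a,\, l \geq b} V_k \otimes W_l$, and the binomial expansion of $(I+J)^s$ gives $(I+J)^s = \bigoplus_{k + l \geq s} V_k \otimes_\kk W_l$, which will reduce every forthcoming intersection calculation to elementary index book-keeping.

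For the upper bound I would take the filtration $R/(I+J)^s = \bar{J}^0 \supseteq \bar{J}^1 \supseteq \cdots \supseteq \bar{J}^s = 0$ with $\bar{J}^j := (J^jR + (I+J)^s)/(I+J)^s$; the final term vanishes because $J^sR \subseteq (I+J)^s$. The modular law rewrites $\bar{J}^{j-1}/\bar{J}^j$ as $J^{j-1}R/(J^jR + J^{j-1}R \cap (I+J)^s)$, and the bigraded description then shows that $J^{j-1}R \cap (I+J)^s$ contributes $I^{s-j+1} \otimes_\kk W_{j-1}$ at the layer $l = j-1$ and is absorbed by $J^jR$ at all higher layers; setting $i = s - j + 1$ this yields $\bar{J}^{j-1}/\bar{J}^j \cong (A/I^i) \otimes_\kk (J^{s-i}/J^{s-i+1})$. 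Combining the standard inclusion $\Ass_R(R/(I+J)^s) \subseteq \bigcup_{j=1}^s \Ass_R(\bar{J}^{j-1}/\bar{J}^j)$ with Lemma~\ref{lem_Ass_tensor} applied to each quotient (using $\Ass_A(A/I^i) = \Ass_A(I^i)$) will produce both the upper containment and the ``more precise'' assertion at the end of the lemma.

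For the lower bound, for each $1 \leq i \leq s$ the plan is to produce an injective $R$-linear map
$$
\mu_i \colon (I^{i-1}/I^i) \otimes_\kk (J^{s-i}/J^{s-i+1}) \hookrightarrow R/(I+J)^s
$$
induced by multiplication, and then apply Lemma~\ref{lem_Ass_tensor} to its domain. By $\kk$-flatness the domain is canonically $I^{i-1}J^{s-i}/(I^iJ^{s-i} + I^{i-1}J^{s-i+1})$, so injectivity is equivalent to the identity $I^{i-1}J^{s-i} \cap (I+J)^s = I^iJ^{s-i} + I^{i-1}J^{s-i+1}$. This identity is transparent from the bigraded decomposition: the left hand side is $\bigoplus_{k \geq i-1,\, l \geq s-i,\, k+l \geq s} V_k \otimes W_l$, and the unique pair with $k \geq i-1,\, l \geq s-i$ that fails $k+l \geq s$ is $(i-1, s-i)$, so the remaining index set splits as $\{k \geq i,\, l \geq s-i\} \cup \{k \geq i-1,\, l \geq s-i+1\}$, exactly matching the right hand side. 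The main obstacle will be precisely this kind of intersection identity: a naive argument would distribute an intersection over an arbitrary sum of ideals, which is not legitimate in general, and it is the $\kk$-flatness of $A$ and $B$, via the bigraded decomposition, that justifies the computation.
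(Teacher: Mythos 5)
The paper disposes of this lemma by citing \cite[Theorem~4.1]{NT}; you instead attempt a from-scratch proof, which is a reasonable thing to do since \cite{NT} was not yet published. Your overall architecture is sound: the filtration $\bar{J}^j = (J^jR + (I+J)^s)/(I+J)^s$ with successive quotients identified as $(A/I^i)\otimes_\kk (J^{s-i}/J^{s-i+1})$ gives the upper bound and the refined assertion via Lemma~\ref{lem_Ass_tensor}, and for the lower bound the injective maps from $I^{i-1}J^{s-i}/(I^iJ^{s-i}+I^{i-1}J^{s-i+1})$ into $R/(I+J)^s$ indeed reduce matters to the intersection identity you isolate. This is, modulo presentation, the standard proof.

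There is, however, a genuine gap in your stated justification. You announce as ``the main technical tool'' a direct sum decomposition $A = \bigoplus_k V_k$, $B = \bigoplus_l W_l$, hence $R = \bigoplus_{k,l} V_k\otimes_\kk W_l$. This does not hold for a general Noetherian $\kk$-algebra: the sum $\bigoplus_k V_k$ need not exhaust $A$. Already if $\bigcap_n I^n \neq 0$ the decomposition fails (e.g.\ $A=\kk[x]/(x-x^2)$, $I=(x)$), and even when the Krull intersection is zero it still fails (e.g.\ $A=\kk[[x]]$, $I=(x)$, where $\bigoplus_k V_k = \kk[x]\subsetneq A$). Since the paper's setting is arbitrary Noetherian $\kk$-algebras with $A\otimes_\kk B$ Noetherian, you cannot lean on this bigrading as stated.

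Fortunately the identities you actually use do not need it, and the correct tool is already in the paper: Lemma~\ref{lem_intersection}, i.e.\ $I'R\cap J'R=I'J'$ for $I'\subseteq A$, $J'\subseteq B$. For the upper bound, from $(I+J)^s\subseteq I^iR + J^{s-i+1}R$ (with $i=s-j+1$) one gets $J^{s-i}R\cap (I+J)^s \subseteq I^iJ^{s-i}+J^{s-i+1}R$ by writing an element as $u+v$, $u\in I^iR$, $v\in J^{s-i+1}R$, observing $u\in J^{s-i}R$, and invoking $I^iR\cap J^{s-i}R = I^iJ^{s-i}$; the reverse containment is clear, so $\bar J^{j-1}/\bar J^j\cong (A/I^i)\otimes_\kk(J^{s-i}/J^{s-i+1})$. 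For the lower bound, the identity $I^{i-1}J^{s-i}\cap(I+J)^s = I^iJ^{s-i}+I^{i-1}J^{s-i+1}$ follows in the same way: intersect with $I^iR+J^{s-i+1}R$, split $x=u+v$, and use Lemma~\ref{lem_intersection} twice to place $u\in I^iJ^{s-i}$ and $v\in I^{i-1}J^{s-i+1}$. If you replace your bigraded-decomposition justification with these two elementary computations, the proof goes through and matches the strategy underlying \cite[Theorem~4.1]{NT}.
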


\begin{proof} The two containment are contained in \cite[Theorem 4.1]{NT}. Moreover, the proof of \cite[Theorem 4.1]{NT} shows that
	\[
	\Ass_R ((I+J)^s)  \subseteq \bigcup_{i=1}^s \Ass_R \left((A/I^i) \otimes_\kk (J^{s-i}/J^{s-i+1})\right).
	\]
Hence, the remaining assertion follows by using Lemma \ref{lem_Ass_tensor}.
\end{proof}

\begin{lemma}
\label{lem_grade}
Let $\pp \subset A, \qq \subset B$ be prime ideals. Let $M, N$ be finitely generated modules over $A,B$, respectively. Let $P\in \Ass_R(\pp +\qq)$ be a prime ideal of $R$. Then there is an equality
\[
\grade(P,M\otimes_\kk N)=\grade(\pp ,M)+\grade(\qq,N).
\]
\end{lemma}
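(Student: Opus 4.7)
The plan is to establish the two inequalities separately. Write $m = \grade(\pp, M)$ and $n = \grade(\qq, N)$, and pick maximal regular sequences $\underline{x} = x_1, \ldots, x_m$ on $M$ in $\pp$ and $\underline{y} = y_1, \ldots, y_n$ on $N$ in $\qq$.

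For the inequality $\grade(P, M\otimes_\kk N) \geq m+n$, I construct an $M \otimes_\kk N$-regular sequence of length $m+n$ in $\pp+\qq \subseteq P$. Since $\kk$ is a field, $-\otimes_\kk N$ is exact; iterating this on short exact sequences $0 \to M' \xrightarrow{x_i} M' \to M'/x_iM' \to 0$ (with $M'$ running through the successive quotients of $M$) shows that $x_1 \otimes 1, \ldots, x_m \otimes 1$ is $M \otimes_\kk N$-regular with quotient $(M/\underline{x}M) \otimes_\kk N$. A symmetric argument appends $1 \otimes y_1, \ldots, 1 \otimes y_n$ to produce the desired regular sequence of length $m+n$.

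For the reverse inequality, I show the sequence above is already maximal within $P$, i.e., every element of $P$ is a zero-divisor on the quotient $Q := (M/\underline{x}M) \otimes_\kk (N/\underline{y}N)$. By maximality of $\underline{x}$ and $\underline{y}$, some $\pp'' \in \Ass_A(M/\underline{x}M)$ contains $\pp$ and some $\qq'' \in \Ass_B(N/\underline{y}N)$ contains $\qq$. By Lemma~\ref{lem_Ass_tensor}, $\Min_R(\pp'' + \qq'') \subseteq \Ass_R(Q)$, and the same lemma applied to $A/\pp \otimes_\kk B/\qq$ forces $P \cap A = \pp$, $P \cap B = \qq$, and $P \in \Min_R(\pp+\qq)$. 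The key claim is the existence of a prime $P^* \in \Min_R(\pp''+\qq'')$ with $P \subseteq P^*$; granting this, prime avoidance forces every element of $P$ to be a zero-divisor on $Q$, giving $\grade(P, Q) = 0$ and hence $\grade(P, M\otimes_\kk N) \leq m+n$.

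The main obstacle is producing $P^*$ in the embedded case $\pp'' \supsetneq \pp$ or $\qq'' \supsetneq \qq$; in the non-embedded case one simply takes $P^* = P$. The embedded case is handled by passing to the domain $R/P$, which is a quotient of $A/\pp \otimes_\kk B/\qq$: one shows that the images of $\pp''$ and $\qq''$ generate a proper ideal of $R/P$ (using that $R/P$ cannot be a field in this case, since the minimality of $P$ over $\pp+\qq$ combined with $\pp$ being non-maximal forces $R/(\pp+\qq)$ to have positive Krull dimension), and then lifts a prime of $R/P$ containing these images back to a suitable $P^* \supseteq P + \pp'' + \qq''$ lying in $\Min_R(\pp''+\qq'')$.
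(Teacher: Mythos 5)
Your lower bound is fine: since $\kk$ is a field, $- \otimes_\kk N$ is exact, so $x_1\otimes 1,\ldots,x_m\otimes 1,1\otimes y_1,\ldots,1\otimes y_n$ is a regular sequence on $M\otimes_\kk N$ lying in $\pp+\qq\subseteq P$, with quotient $Q=(M/\underline{x}M)\otimes_\kk(N/\underline{y}N)\neq 0$. This gives $\grade(P,M\otimes_\kk N)\geq m+n$.

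The upper bound is where your argument has a genuine gap. You correctly reduce to showing that $P$ lies in some associated prime of $Q$, and via Lemma~\ref{lem_Ass_tensor} you are looking for $P^*\in\Min_R(\pp''+\qq'')\subseteq\Ass_R(Q)$ with $P\subseteq P^*$. But the sketch you give for producing $P^*$ does not hold up. First, you fix $\pp''\supseteq\pp$ and $\qq''\supseteq\qq$ by prime avoidance \emph{before} searching for $P^*$; there is no reason the particular primes picked by prime avoidance are ones over which a suitable $P^*$ exists, so the order of quantifiers is already suspect. Second, the justification that the images of $\pp''$ and $\qq''$ generate a proper ideal of the domain $R/P$ --- namely that $R/P$ is not a field --- is insufficient: a non-field domain can certainly contain two primes whose images generate the unit ideal (think of $(x)$ and $(x-1)$ in $\kk[x]$), so ``not a field'' alone does not give properness of $P+\pp''R+\qq''R$. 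Third, and most seriously, even granting that $P+\pp''R+\qq''R$ is proper, a minimal prime $P^*$ of this \emph{larger} ideal need not be minimal over $\pp''+\qq''$; you simply assert that a ``suitable'' lift lies in $\Min_R(\pp''+\qq'')$, but that is precisely the content of the claim, and no argument is offered. Indeed, by Lemma~\ref{lem_Ass_tensor} the minimal primes of $\pp''+\qq''$ all contract to $\pp''$ in $A$ and to $\qq''$ in $B$, and there is no a priori reason a minimal prime of $P+\pp''R+\qq''R$ does so. The existence of a $P^*$ as you want is in fact \emph{equivalent} to the maximality of the regular sequence, so this step cannot be dismissed as routine; it is the heart of the matter, and your sketch does not close it.

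The paper avoids all of this by computing $\depth(M\otimes_\kk N)_P$ directly: using that $A_\pp\to R_P$ and $B_\qq\to R_P$ are flat, it applies the depth formula of \cite[Chap.~IV, (6.3.1)]{Gr} twice to get $\depth(M\otimes_\kk N)_P=\depth M_\pp+\depth N_\qq+\depth\bigl(R/(\pp+\qq)\bigr)_P$, and the last summand vanishes because $P\in\Ass_R(\pp+\qq)$. That argument is purely local and does not need to identify a specific associated prime of $Q$ containing $P$, which is exactly the step your approach cannot supply without further work. If you want to salvage the regular-sequence approach, you would need an actual proof (not an assertion) that $P$ is contained in some minimal prime of $\pp'+\qq'$ for \emph{some} pair $\pp'\in\Ass_A(M/\underline{x}M)$, $\qq'\in\Ass_B(N/\underline{y}N)$ with $\pp'\supseteq\pp$, $\qq'\supseteq\qq$; I do not see how to do that without effectively re-deriving the flatness-based depth equality.
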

\begin{proof}
By Lemma \ref{lem_Ass_tensor}, $\pp=P\cap A, \qq=P\cap B$. It is clear that $(M \otimes_\kk N)_P = M_\pp \otimes_{A_\pp} (A \otimes_\kk N)_P.$
Since the map $A \to R$ is flat, so is the map $A_\pp \to R_P$.
Applying \cite[Chap. IV, (6.3.1)]{Gr}, we have
$$\depth (M\otimes_\kk N)_P   =  \depth M_\pp + \depth k(\pp) \otimes_{A_\pp} (A \otimes_\kk N)_P,$$
where $k(\pp)$ denotes the residue field of $A_\pp$.
Since $k(\pp) = (A/\pp)_\pp$, we have
$((A/\pp) \otimes_\kk N)_P =  k(\pp) \otimes_{A_\pp} (A \otimes_\kk N)_P$. Therefore,
$$\depth (M\otimes_\kk N)_P   =  \depth M_\pp + \depth ((A/\pp) \otimes_\kk N)_P.$$
Since the map $B \to R$ is flat, by similar arguments
$$\depth ((A/\pp) \otimes_\kk N)_P = \depth N_\qq + \depth ((A/\pp) \otimes_\kk (B/\qq))_P.$$

Note that $(A/\pp) \otimes_\kk (B/\qq) = R/(\pp +\qq)$. Hence $ \depth ((A/\pp) \otimes_\kk (B/\qq))_P=\depth (R/(\pp+\qq))_P=0$, as $P\in \Ass_R(\pp+\qq)$. From the above equalities we get
$\depth (M\otimes_\kk N)_P = \depth M_\pp +\depth N_\qq,$ namely
$
\grade(P,M\otimes_\kk N)=\grade(\pp ,M)+\grade(\qq,N).
$
The proof is concluded.
\end{proof}

The next two results are useful to the proof of the main result in Section \ref{sec.applications}.
\begin{lemma}
\label{lem_satpower_min_symbpower}
Let $I$ and $J$ be nonzero proper ideals of $A$ and $B$, respectively. Set 
$$K = \bigcap_{\pp \in \Ass_A^*(I) \setminus \Min_A(I)} \pp  \text{ and } L = \bigcap_{\qq \in \Ass_B^*(J) \setminus \Min_B(J)} \qq.$$ 
Let $s\in \NN$ and $P\in \Ass_R((I+J)^s)$. Then $P\not\in \Min_R(I+J)$ if and only if $KL\subseteq P$. In particular, 
\[
(I+J)^{(s)}_{KL}=\ \up{m}(I+J)^{(s)}.
\]
\end{lemma}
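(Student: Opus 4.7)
The plan is to establish the biconditional by combining the structure theorem on $\Ass_R((I+J)^s)$ from Lemma \ref{lem_ass_powers} with the description of minimal primes of a tensor product from Lemma \ref{lem_Ass_tensor}, and then to deduce the identity $(I+J)^{(s)}_{KL}=\ \up{m}(I+J)^{(s)}$ by comparing primary-component descriptions via Lemma \ref{lem.symbSat_min} applied to $I+J\subseteq R$. Set $\pp:=P\cap A$ and $\qq:=P\cap B$; since $P$ is prime in $R$ and $K\subseteq A$, $L\subseteq B$, a first useful observation is that $KL\subseteq P$ if and only if $K\subseteq \pp$ or $L\subseteq \qq$.

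For the direction $P\notin \Min_R(I+J)\Rightarrow KL\subseteq P$, I would apply the ``more precisely'' clause of Lemma \ref{lem_ass_powers} to produce $1\le i\le s$ with $\pp\in \Ass_A(I^i)\subseteq \Ass_A^*(I)$, $\qq\in \Ass_B(J^{s-i}/J^{s-i+1})$, and $P\in \Min_R(\pp+\qq)$; the short exact sequence $0\to J^{n}/J^{n+1}\to B/J^{n+1}\to B/J^{n}\to 0$ yields $\qq\in \Ass_B(J^{s-i+1})\subseteq \Ass_B^*(J)$. If both $\pp\in \Min_A(I)$ and $\qq\in \Min_B(J)$, then Lemma \ref{lem_Ass_tensor} applied to $M=A/I$, $N=B/J$ would force $P\in \Min_R(I+J)$, contradicting the hypothesis. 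Hence $\pp\in \Ass_A^*(I)\setminus \Min_A(I)$ or $\qq\in \Ass_B^*(J)\setminus \Min_B(J)$, and by the definitions of $K$ and $L$ this delivers $K\subseteq \pp$ or $L\subseteq \qq$.

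Conversely, suppose $P\in \Min_R(I+J)$; Lemma \ref{lem_Ass_tensor} then gives $\pp\in \Min_A(I)$ and $\qq\in \Min_B(J)$, and by symmetry it suffices to show $K\not\subseteq \pp$. If $\Ass_A^*(I)\setminus \Min_A(I)=\emptyset$ this is trivial since $K=A$; otherwise $K$ is a finite intersection of primes, and $K\subseteq \pp$ would force some $\pp'\in \Ass_A^*(I)\setminus \Min_A(I)$ with $\pp'\subseteq \pp$. But every $\pp'\in \Ass_A^*(I)$ contains a power of $I$, hence contains a minimal prime of $I$; combined with $\pp\in \Min_A(I)$ this forces $\pp'=\pp$, contradicting $\pp'\notin \Min_A(I)$.

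For the ``in particular'' assertion, Lemma \ref{lem.symbSat_min} applied to $I+J\subseteq R$ gives $\up{m}(I+J)^{(s)}=(I+J)^{(s)}_{K_s}$ for $K_s=\bigcap_{P'\in\Ass_R((I+J)^s)\setminus\Min_R(I+J)}P'$. Via \eqref{eq_satpower_andprimarydecomp}, both $(I+J)^{(s)}_{KL}$ and $(I+J)^{(s)}_{K_s}$ equal the intersection of those primary components $Q(P)$ of $(I+J)^s$ whose associated prime $P$ fails to contain the corresponding ideal; the biconditional just proved (combined with a prime-avoidance check for $K_s$) identifies both surviving collections with the set of $P\in \Ass_R((I+J)^s)\cap \Min_R(I+J)$. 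The main obstacle is tracking the interplay between $\Ass^*$ and $\Min$ on the two factors and verifying that no embedded prime of a power of $I$ can sit below a minimal prime of $I$; once this containment structure is pinned down, everything else is essentially bookkeeping.
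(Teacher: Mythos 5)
Your proposal is correct and follows essentially the same strategy as the paper: combine Lemma \ref{lem_ass_powers} (to realize $P$ as a minimal prime of $\pp+\qq$ with $\pp\in\Ass_A^*(I)$, $\qq\in\Ass_B^*(J)$) with the characterization in Lemma \ref{lem_Ass_tensor}, then compare primary components via \eqref{eq_satpower_andprimarydecomp}. The only differences are cosmetic: for the converse you argue contrapositively via a clean containment argument ($\pp'\supseteq$ some minimal prime forces $\pp'=\pp$), whereas the paper argues directly that $\pp$ contains an embedded prime and hence is not minimal, and for the ``in particular'' clause you detour through $(I+J)^{(s)}_{K_s}$ when the paper invokes the final assertion of Lemma \ref{lem.symbSat_min} directly; both routes are valid.
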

\begin{proof}
 Since $P\in \Ass_R((I+J)^s)$, Lemma \ref{lem_ass_powers} implies the existence of $1\le i\le s$, $\pp \in \Ass_A(I^i)$, and $\qq \in \Ass_B(J^{s-i}/J^{s-i+1}) \subseteq \Ass_B(J^{s-i+1}) $ such that $P \in \Min_R(\pp+\qq)$. By Lemma \ref{lem_Ass_tensor}, $\pp=P\cap A, \qq=P\cap B$. Moreover, $P\not\in \Min_R(I+J)$ if and only if either $\pp\notin \Min_A(I)$ or $\qq \notin \Min_B(J)$. Hence, if $P\not\in \Min_R(I+J)$, then either $K\subseteq \pp$ or $L\subseteq \qq$, and in both cases, $KL\subseteq \pp+\qq \subseteq P$.
 
 Conversely, if $KL\subseteq P$, then without loss of generality, we may assume $K\subseteq P$. This implies $K\subseteq P\cap A=\pp$. Hence $K\neq (1)$ and $\pp$ contains an element of  $\Ass_A^*(I) \setminus \Min_A(I)$, so $\pp \notin \Min_A(I)$. This yields $P\not\in \Min_R(I+J)$.
 
For the remaining assertion, fix an irredundant primary decomposition of $(I+J)^s$. By Lemma \ref{lem.symbSat_min}, $\ \up{m}(I+J)^{(s)}$ is the intersection of components that are primary to ideals  $P \in \Min_R(I+J)$, i.e., by the first assertion, those $P\in \Ass_R((I+J)^s)$ such that $KL \not \subseteq P$. This, together with Equality \eqref{eq_satpower_andprimarydecomp}, gives the desired equality.
\end{proof}

The analog of Lemma \ref{lem_satpower_min_symbpower} for symbolic powers defined in terms of associated primes is given in the next lemma.

\begin{lemma}
\label{lem_satpower_ass_symbpower}
Let $I$ and $J$ be nonzero proper ideals of $A$ and $B$, respectively. Set 
$$K = \bigcap_{\substack{\pp \in \Ass_A^*(I) \\ \grade(\pp,A/I) \ge 1}} \pp \text{ and } L = \bigcap_{\substack{\qq \in \Ass_B^*(J) \\ \grade(\qq,B/J) \ge 1}} \qq.$$ 
Let $s\in \NN$ and $P\in \Ass_R((I+J)^s)$. Then $\grade(P,R/(I+J))\ge 1$ if and only if $KL\subseteq P$. In particular,
\[
(I+J)^{(s)}_{KL}=\ \up{a}(I+J)^{(s)}.
\]
\end{lemma}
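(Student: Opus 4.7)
My plan is to mimic the proof of Lemma \ref{lem_satpower_min_symbpower}, replacing the role of the set $\Ass_A^*(I)\setminus \Min_A(I)$ with the set $\{\pp \in \Ass_A^*(I) \mid \grade(\pp,A/I) \ge 1\}$, and using Lemma \ref{lem_grade} to relate grades of $P$ over $R/(I+J)$ with the grades of $\pp = P\cap A$ over $A/I$ and $\qq = P\cap B$ over $B/J$. The first part of the proof establishes the equivalence ``$\grade(P,R/(I+J))\ge 1$ iff $KL\subseteq P$''; the ``in particular'' part then follows mechanically from Lemma \ref{lem.symbSat_ass} and Equation \eqref{eq_satpower_andprimarydecomp} exactly as in Lemma \ref{lem_satpower_min_symbpower}.

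For the set-up, fix $P\in \Ass_R((I+J)^s)$. By Lemma \ref{lem_ass_powers}, there exist $1\le i\le s$, $\pp \in \Ass_A(I^i)\subseteq \Ass_A^*(I)$ and $\qq\in \Ass_B(J^{s-i}/J^{s-i+1})\subseteq \Ass_B(J^{s-i+1})\subseteq \Ass_B^*(J)$ with $P\in \Min_R(\pp+\qq)$. By Lemma \ref{lem_Ass_tensor}, $\pp=P\cap A$ and $\qq=P\cap B$. Since $P\in \Min_R(\pp+\qq)\subseteq \Ass_R(\pp+\qq)$, applying Lemma \ref{lem_grade} to $M=A/I$, $N=B/J$ yields the key identity
\[
\grade(P,R/(I+J))=\grade(\pp,A/I)+\grade(\qq,B/J).
\]

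For the ``if'' direction, assume $KL\subseteq P$. Since $P$ is prime, either $K\subseteq P$ or $L\subseteq P$; by symmetry, say $K\subseteq P\cap A=\pp$. In particular $K\ne (1)$, so $K$ is a genuine finite intersection of primes $\pp'\in \Ass_A^*(I)$ with $\grade(\pp',A/I)\ge 1$. The product of these $\pp'$ lies inside $K\subseteq \pp$, so primeness of $\pp$ forces $\pp\supseteq \pp'$ for some such $\pp'$. Monotonicity of grade in the ideal argument gives $\grade(\pp,A/I)\ge \grade(\pp',A/I)\ge 1$, and the key identity yields $\grade(P,R/(I+J))\ge 1$. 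Conversely, if $\grade(P,R/(I+J))\ge 1$, the key identity forces (say) $\grade(\pp,A/I)\ge 1$. Since $\pp\in \Ass_A^*(I)$ already, $\pp$ itself is one of the primes in the intersection defining $K$, hence $K\subseteq \pp\subseteq P$, so $KL\subseteq P$.

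The ``in particular'' assertion follows by taking an irredundant primary decomposition of $(I+J)^s$: by Lemma \ref{lem.symbSat_ass}, $\up{a}(I+J)^{(s)}$ is the intersection of the components whose associated primes $P$ satisfy $\grade(P,R/(I+J))=0$, while by Equation \eqref{eq_satpower_andprimarydecomp}, $(I+J)^{(s)}_{KL}$ is the intersection of the components whose associated primes $P$ satisfy $KL\not\subseteq P$. The biconditional just proved identifies these two conditions, so the two ideals coincide. The main technical point to watch is the minor edge case when $K=(1)$ or $L=(1)$ (i.e., when one of the defining sets is empty); in that case $KL$ reduces accordingly and the argument goes through unchanged, since the empty intersection is the unit ideal and $\grade(\pp,A/I)=0$ forces the corresponding summand in the key identity to vanish. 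I do not anticipate any other obstacle, as every ingredient (the decomposition of associated primes of $(I+J)^s$, the grade-additivity across the tensor product, and the characterization of $\up{a}I^{(s)}$ as a saturation) is already in place in the excerpt.
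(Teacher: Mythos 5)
Your proposal is correct and follows essentially the same route as the paper's proof: both decompose $P$ via Lemma \ref{lem_ass_powers} and Lemma \ref{lem_Ass_tensor} into $\pp = P\cap A$, $\qq = P\cap B$, invoke the grade-additivity identity from Lemma \ref{lem_grade}, and then settle the biconditional by matching the grade condition on $\pp,\qq$ against containment of $K,L$. The only difference is a cosmetic one in the ``$KL \subseteq P$ implies grade $\ge 1$'' direction: you argue via the product of the primes defining $K$ and grade monotonicity under inclusion, whereas the paper argues more tersely that $K$ (being nonunit) contains an element regular on $A/I$ — the element $x$ of Lemma \ref{lem.symbSat_ass} — and hence so does $\pp \supseteq K$. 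Both are valid; your version makes the prime-avoidance mechanism explicit rather than leaning on the existence of $x$.
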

\begin{proof}
 Since $P\in \Ass_R((I+J)^s)$, Lemma \ref{lem_ass_powers} implies the existence of $1\le i\le s$ such that $\pp \in \Ass_A(I^i)$, $\qq \in \Ass_B(J^{s-i}/J^{s-i+1}) \subseteq \Ass_B(J^{s-i+1}) $, and $P \in \Min_R(\pp+\qq)$. By Lemma \ref{lem_Ass_tensor}, $\pp=P\cap A, \qq=P\cap B$. By Lemma \ref{lem_grade},
 \[
\grade(P,R/(I+J))=\grade(\pp,A/I)+\grade(\qq,B/J).
 \]
 Hence if $\grade(P,R/(I+J))\ge 1$, then either $\grade(\pp,A/I)\ge 1$ or $\grade(\qq,B/J)\ge 1$. Thus either $K\subseteq \pp$ or $L\subseteq \qq$, and in both cases, $KL\subseteq \pp+\qq \subseteq P$.
 
 Conversely, if $KL\subseteq P$, then without loss of generality, we may assume $K\subseteq P$. This implies $K\subseteq P\cap A=\pp$. Thus $K\neq (1)$, and $\pp$ contains a regular element on $A/I$, namely $\grade(\pp,A/I)\ge 1$. This yields $\grade(P,R/(I+J))\ge 1$.
 
For the remaining assertion, fix an irredundant primary decomposition of $(I+J)^s$. By Lemma \ref{lem.symbSat_ass}, $\ \up{a}(I+J)^{(s)}$ is the intersection of the components that are primary to ideals  $P \in \Ass_R((I+J)^s)$ such that  $\grade(P,R/(I+J))=0$, i.e. by the first assertion, those $P\in \Ass_R((I+J)^s)$ such that $KL \not \subseteq P$. This, together with Equality \eqref{eq_satpower_andprimarydecomp}, gives the desired equality.
\end{proof}


\section{Binomial expansion for saturated powers} \label{sec.bin}

In this section, we prove the main result of this paper which establishes a general binomial expansion for saturated powers of sums of ideals. Throughout this section, let $A$ and $B$ be Noetherian $\kk$-algebras such that $R = A \otimes_\kk B$ is also Noetherian.
For ideals $I \subseteq A$ and $J \subseteq B$, when the context is clear we shall also use $I$ and $J$ to represent their extensions to $R$. 

We begin by establishing one inclusion for the desired binomial expansion.

\begin{proposition}\label{prop_sub}
	Let $I$, $K$ be ideals of $A$, and let $J$, $L$ be ideals of $B$. Then,
	$$ \displaystyle \sum_{i=0}^s I^{(i)}_K J^{(s-i)}_L \subseteq (I+J)^{(s)}_{KL}.$$
\end{proposition}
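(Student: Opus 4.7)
The plan is to show, for each $0\le i\le s$, the pointwise containment $I^{(i)}_K J^{(s-i)}_L \subseteq (I+J)^{(s)}_{KL}$; summing over $i$ then gives the proposition. Since the product ideal $I^{(i)}_K J^{(s-i)}_L$ is generated by products $ab$ with $a\in I^{(i)}_K$ and $b\in J^{(s-i)}_L$, it suffices to check that every such product lies in $(I+J)^{(s)}_{KL}$.

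Fix $a\in I^{(i)}_K$ and $b\in J^{(s-i)}_L$. By Definition \ref{def.satpower}, there exist positive integers $t_1, t_2$ such that
\[
aK^{t_1}\subseteq I^i\quad\text{and}\quad bL^{t_2}\subseteq J^{s-i}.
\]
Set $t=t_1+t_2$. Since $K^{t}\subseteq K^{t_1}$ and $L^{t}\subseteq L^{t_2}$, we have $aK^{t}\subseteq I^i$ and $bL^{t}\subseteq J^{s-i}$. Using that $K$ and $L$ live in commuting factors of $R=A\otimes_\kk B$, we obtain
\[
ab\,(KL)^{t}=(aK^{t})(bL^{t})\subseteq I^i J^{s-i}.
\]
The binomial expansion for the ordinary power $(I+J)^s=\sum_{j=0}^{s} I^j J^{s-j}$ gives $I^i J^{s-i}\subseteq (I+J)^s$, so $ab\,(KL)^{t}\subseteq (I+J)^s$, i.e. $ab\in (I+J)^s:(KL)^{t}\subseteq (I+J)^{(s)}_{KL}$.

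This settles $I^{(i)}_K J^{(s-i)}_L \subseteq (I+J)^{(s)}_{KL}$ for each $i$, and taking the sum over $i=0,\dots,s$ completes the proof. There is no real obstacle here: the argument is essentially bookkeeping combined with the elementary $(I+J)^s=\sum I^j J^{s-j}$. The only point to watch is picking a single exponent $t$ that works for both $a$ and $b$ simultaneously, which is handled by taking $t=t_1+t_2$.
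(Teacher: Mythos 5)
Your proof is correct and is essentially the same as the paper's, just phrased at the level of elements rather than ideals: the paper first notes $I^{(i)}_K \subseteq (I+J)^{(i)}_{KL}$ and $J^{(j)}_L \subseteq (I+J)^{(j)}_{KL}$ and then invokes the multiplicativity of saturations, $(X:Z^\infty)(Y:Z^\infty) \subseteq XY:Z^\infty$, whose element-level content is precisely your step of combining the two exponents into $t=t_1+t_2$. One small cosmetic remark: the identity $ab\,(KL)^t=(aK^t)(bL^t)$ needs nothing about $K$ and $L$ living in commuting tensor factors—it is the standard fact $(KL)^t=K^tL^t$ valid for any two ideals in a commutative ring.
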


\begin{proof}
	Note that $I \subseteq I+J$ and $KL \subseteq K$. Therefore, for each $i \ge 0$,
	$$(I^i:_A K^{\infty})R=I^i:_R K^{\infty} \subseteq (I+J)^i:_R (KL)^{\infty}.$$
	Similarly, for each $j \ge 0$, $(J^j :_BL^{\infty})R \subseteq (I+J)^j:_R (KL)^{\infty}.$ Consequently,  for $i, j \ge 0$,
	$$I^{(i)}_KJ^{(j)}_L \subseteq  \left((I+J)^i:_R (KL)^{\infty}\right) \left((I+J)^j:_R (KL)^{\infty}\right) \subseteq (I+J)^{i+j}:_R (KL)^{\infty}.$$
	Hence, the assertion follows.
\end{proof}

Before proving the reverse inclusion and thus establishing the desired binomial expansion, we shall present a few useful lemmas.

\begin{lemma}
	\label{lem_intersection}
	Let $I \subseteq A$ and $J \subseteq B$ be ideals. As ideals in $R$, we have $IJ = I \cap J.$
\end{lemma}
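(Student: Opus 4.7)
The plan is to recognize all three ideals as explicit $\kk$-subspaces of $R = A \otimes_\kk B$ and exploit that tensor product over a field is exact.

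First, the containment $IJ \subseteq I \cap J$ is immediate, so the content is in showing $I \cap J \subseteq IJ$. I would begin by identifying the extensions inside $R$: since $\kk$ is a field, the inclusion $I \hookrightarrow A$ remains injective after tensoring with $B$, giving $IR = I \otimes_\kk B$ as a $\kk$-subspace of $R$, and similarly $JR = A \otimes_\kk J$. Next I would show that $IJ$, as an ideal of $R$, coincides with $I \otimes_\kk J$. The generators of $IR$ and $JR$ are elements of the form $i \otimes 1$ and $1 \otimes j$ with $i \in I$, $j \in J$, so $IJ$ is the $R$-span of $\{i \otimes j\}$; but $I \otimes_\kk J$ is already closed under multiplication by elementary tensors $a \otimes b$ via $(a \otimes b)(i \otimes j) = (ai) \otimes (bj)$, so it is an $R$-submodule, and it equals the $\kk$-span, hence the $R$-span, of the same set.

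The remaining step reduces $I \cap J \subseteq IJ$ to the vector-space identity
\[
(I \otimes_\kk B) \cap (A \otimes_\kk J) = I \otimes_\kk J
\]
inside $A \otimes_\kk B$. I would prove this by choosing $\kk$-linear splittings of the inclusions $I \hookrightarrow A$ and $J \hookrightarrow B$, writing $A = I \oplus A'$ and $B = J \oplus B'$ as $\kk$-vector spaces. Distributing the tensor product over these direct sums decomposes $A \otimes_\kk B$ into four summands, and the two ideals $I \otimes_\kk B$ and $A \otimes_\kk J$ are each the sum of two of them, meeting precisely in $I \otimes_\kk J$. Alternatively, one can argue via the exact sequences
\[
0 \to I \otimes_\kk B \to A \otimes_\kk B \to (A/I) \otimes_\kk B \to 0
\]
and the analogous one for $J$, noting that an element of the intersection maps to zero in $(A/I) \otimes_\kk (B/J)$ and tracking kernels.

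There is no real obstacle here beyond bookkeeping; the only place one must be careful is to justify the flat base change / splitting step, which is where the hypothesis that $\kk$ is a field is essential. Once the identification $I \cap J = I \otimes_\kk J = IJ$ is in place, the lemma follows.
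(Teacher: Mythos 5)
Your argument is correct and complete. Note that the paper itself does not prove this lemma: it simply cites \cite[Lemma 3.1]{HNTT} (and \cite[Lemma 1.1]{HoaT} for the polynomial case) and moves on. Your proof supplies the details that the paper leaves to those references, and it does so in the standard way: identify $IR = I \otimes_\kk B$, $JR = A \otimes_\kk J$, and $IJ = I \otimes_\kk J$ as $\kk$-subspaces of $R = A \otimes_\kk B$ (legitimate because $\kk$ is a field, so $-\otimes_\kk B$ and $A \otimes_\kk -$ are exact), and then observe that choosing $\kk$-linear complements $A = I \oplus A'$ and $B = J \oplus B'$ splits $R$ into four direct summands, of which $(I\otimes_\kk B) \cap (A\otimes_\kk J)$ is exactly $I\otimes_\kk J$. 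Every step you flag as needing care — flatness over a field for the identifications, the observation that $I\otimes_\kk J$ is an $R$-submodule because it is closed under multiplication by elementary tensors and is a $\kk$-subspace — is justified correctly. In short, you have filled in a proof that the paper outsourced, using what is almost certainly the same underlying mechanism as the cited source.
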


\begin{proof} The assertion is a known fact; see, for instance, \cite[Lemma 3.1]{HNTT} (also see \cite[Lemma 1.1]{HoaT} for ideals in polynomial rings).
\end{proof}

\begin{lemma}
	\label{lem_intersection_sum}
	Let $I, K\subseteq A$ and $J\subseteq B$ be ideals. The following equality holds:
	\[
	(I+J)\cap (K+J)=(I\cap K)+J.
	\]
\end{lemma}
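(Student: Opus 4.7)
The containment $(I\cap K)+J \subseteq (I+J)\cap (K+J)$ is immediate from $I\cap K \subseteq I$ and $I\cap K \subseteq K$, so the task is to prove the reverse inclusion. My plan is to pass to the quotient $R/JR$ and exploit the flatness of $\kk$-vector spaces. If $J=B$ then all three ideals equal $R$ and there is nothing to prove, so I may assume $J\neq B$ throughout.

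The first step is the identification $R/JR \cong A\otimes_\kk (B/J)$. Since $A$ is $\kk$-flat, tensoring the short exact sequence $0\to J\to B\to B/J\to 0$ with $A$ over $\kk$ shows that $JR$ coincides with the image of $A\otimes_\kk J$ in $R$ and that the cokernel is $A\otimes_\kk (B/J)$. Because $B/J$ is a $\kk$-vector space, hence free over $\kk$, the quotient $S := R/JR$ is a flat (even free) $A$-algebra via $a\mapsto a\otimes 1$. The second step is the classical observation that finite intersections of ideals commute with flat extension: applying the exact functor $-\otimes_A S$ to the left-exact sequence
\[
0 \to A/(I\cap K) \to (A/I) \oplus (A/K)
\]
yields $(I\cap K)S = IS\cap KS$. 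Applied to $S = R/JR$ this becomes
\[
(I\cap K)(R/JR) = I(R/JR) \cap K(R/JR).
\]
Finally, given any $x \in (I+J)\cap (K+J)$, its image $\bar x$ in $R/JR$ lies in $I(R/JR)\cap K(R/JR) = (I\cap K)(R/JR)$. Hence $\bar x = \bar y$ for some $y\in (I\cap K)R$, so $x-y\in JR$, which gives $x \in (I\cap K) + J$, as required.

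The only substantive step is the flatness of $A \to R/JR$; once this is in hand, the remainder is purely formal. I do not expect to need Lemma~\ref{lem_intersection} or any property specific to ideals coming from the factor $A$ or $B$ beyond what the flatness of $B/J$ over the field $\kk$ already provides, so I anticipate no serious obstacle.
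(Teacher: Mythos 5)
Your proof is correct, and it takes a genuinely different route from the paper's. The paper proves this lemma by a direct element chase: given $x = x_1 + y_1 = x_2 + y_2$ with $x_1 \in I$, $x_2 \in K$, $y_1, y_2 \in J$, it observes $x_1 - x_2 = y_2 - y_1 \in (I+K)\cap J$, then invokes Lemma~\ref{lem_intersection} to rewrite this as $(I+K)J = IJ + KJ$, decomposes $x_1 - x_2 = u - v$ with $u \in IJ$, $v \in KJ$, and concludes $x = (x_1 - u) + (u + y_1) \in (I\cap K) + J$. Your argument instead passes to the quotient $S = R/JR \cong A\otimes_\kk (B/J)$, uses that $B/J$ is $\kk$-free to see $S$ is $A$-free (hence flat), and applies the standard fact that flat base change commutes with finite intersections of ideals, yielding $(I\cap K)S = IS\cap KS$; lifting back gives the result. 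Both proofs ultimately exploit the same structural feature of $R = A\otimes_\kk B$---that each factor remains flat after modding out by an ideal from the other factor, which is also what underlies Lemma~\ref{lem_intersection}---but your version isolates the flatness directly, makes the argument module-theoretic rather than combinatorial, and has the minor advantage of not depending on Lemma~\ref{lem_intersection} at all. The paper's version is more elementary in that it needs no homological machinery beyond the already-established $IJ = I\cap J$, and it keeps all three lemmas in this block uniformly at the level of element manipulation, which matches the inductive style of Lemma~\ref{lem_intersection_binomsum_long}.
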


\begin{proof}
	It suffices to prove that the left hand side is contained in the right hand side. Take $x\in (I+J)\cap (K+J)$. Write $x=x_1+y_1=x_2+y_2$, where $x_1\in I, x_2\in K, y_1,y_2\in J$. Then, by Lemma \ref{lem_intersection}, we have
	\[
	x_1-x_2=y_2-y_1 \in (I+K)\cap J=(I+K)J.
	\]
	Hence $x_1-x_2=u-v, u\in IJ, v\in KJ$. Thus $x_1-u=x_2-v \in I\cap K$, as $x_1,u\in I, x_2,v\in K$. Finally $x=x_1+y_1=(x_1-u)+(u+y_1) \in I\cap K+IJ+J=(I\cap K)+J$, as desired.
\end{proof}

\begin{lemma}
	\label{lem_intersection_binomsum_1}
	Let $\{I_i\}_{i\ge 0}$ and $\{J_j\}_{j \ge 0}$ be filtrations of ideals in $A$ and $B$, respectively. Then for all $s\ge 1$, the following equality holds:
	\[
	J_1 ~\bigcap ~ \left(\sum_{i=0}^{s-2}I_iJ_{s-i} + I_{s-1}\right)=\sum_{i=0}^{s-1}I_iJ_{s-i}.
	\]
\end{lemma}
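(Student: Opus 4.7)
The plan is to establish the two inclusions separately, with the forward direction being immediate and the reverse direction hinging on Lemma \ref{lem_intersection}.

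For the inclusion $\supseteq$, I would note that every term $I_iJ_{s-i}$ on the right with $0\le i\le s-1$ lies in $J_{s-i}\subseteq J_1$ (since $\{J_j\}$ is a decreasing filtration and $s-i\ge 1$), so the entire sum is contained in $J_1$. At the same time, the terms with $i\le s-2$ are obviously in $\sum_{i=0}^{s-2}I_iJ_{s-i}$, while the remaining term $I_{s-1}J_1$ is contained in $I_{s-1}$. Hence the right-hand side sits inside both factors of the intersection.

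For the harder inclusion $\subseteq$, I would take an arbitrary $x \in J_1 \cap \left(\sum_{i=0}^{s-2}I_iJ_{s-i} + I_{s-1}\right)$ and write $x = y + z$ with $y \in \sum_{i=0}^{s-2}I_iJ_{s-i}$ and $z \in I_{s-1}$. The crucial observation is that each term of $y$ involves some $J_{s-i}$ with $s-i \ge 2$, so $y \in J_1$ already. Combined with $x \in J_1$, this gives $z = x - y \in J_1$, and therefore
\[
z \in I_{s-1} \cap J_1.
\]
Now I would invoke Lemma \ref{lem_intersection}, which says that for an ideal in $A$ extended to $R$ and an ideal in $B$ extended to $R$, the intersection equals the product. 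This yields $z \in I_{s-1}J_1$, which is precisely the $i=s-1$ summand of the right-hand side. Combining with $y \in \sum_{i=0}^{s-2}I_iJ_{s-i}$ gives $x \in \sum_{i=0}^{s-1}I_iJ_{s-i}$, completing the proof.

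The only delicate point is the appeal to Lemma \ref{lem_intersection}; everything else is bookkeeping with the decreasing filtration property. I do not anticipate any real obstacle, since the structure of the identity was designed precisely so that the ``mismatched'' summand $I_{s-1}$ can be corrected by intersecting with $J_1$ and using that ideals from $A$ and $B$ inside $R$ intersect in their product.
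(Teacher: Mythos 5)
Your proof is correct and follows essentially the same route as the paper: observe that $\sum_{i=0}^{s-2} I_i J_{s-i} \subseteq J_1$ because the filtration $\{J_j\}$ is decreasing, deduce that the $I_{s-1}$-component lies in $J_1 \cap I_{s-1}$, and then apply Lemma \ref{lem_intersection} to convert this intersection into the product $I_{s-1} J_1$. The decomposition and the key step are identical to the paper's argument.
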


\begin{proof}
	Since $\{J_j\}_{j \ge 0}$ is a filtration, the right hand side is contained in the left hand side. For the reverse containment, consider any element $x$ in the left hand side. Then, $x\in J_1$ and $x=x_1+y_1$, where $x_1\in \sum_{i=0}^{s-2}I_iJ_{s-i} \subseteq J_1$ and $y_1\in I_{s-1}$. Therefore, thanks to Lemma \ref{lem_intersection}, we have
	\[
	x-x_1=y_1 \in J_1\cap I_{s-1}=J_1I_{s-1}.
	\]
	Hence, $x=x_1+y_1\in \sum_{i=0}^{s-2}I_iJ_{s-i}+J_1I_{s-1}=\sum_{i=0}^{s-1}I_iJ_{s-i},$ as desired.
\end{proof}

\begin{lemma}
	\label{lem_intersection_binomsum_long}
	Let $\{I_i\}_{i \ge 0}$ and $\{K_i\}_{i \ge 0}$ be filtrations of ideals in $A$, and let $\{J_j\}_{j \ge 0}$ be a filtration of ideals in $B$. Assume, moreover, that $I_0=K_0=A$ and $J_0=B$. Then, for all $s\ge 1$, the following equality holds:
	\[
	\left(\sum_{i=0}^s I_iJ_{s-i}\right) ~ \bigcap ~ \left(\sum_{i=0}^s K_iJ_{s-i}\right) =\sum_{i=0}^s (I_i\cap K_i) J_{s-i}.
	\]
\end{lemma}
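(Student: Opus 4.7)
The containment $\supseteq$ is immediate: $(I_i\cap K_i)J_{s-i}$ sits inside both $I_iJ_{s-i}$ and $K_iJ_{s-i}$, hence inside both sums on the left. For the reverse containment $\subseteq$, my plan is to avoid an induction on $s$ (which would force one to re-establish the hypothesis for the shifted filtration $\tilde{J}_j := J_{j+1}$ without $\tilde{J}_0 = B$) and instead run a single-pass \emph{peeling} argument that strips off the highest-index term one at a time.

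Concretely, for $0 \le t \le s$, set
\[
L_t := \sum_{i=0}^{s-t} I_iJ_{s-i}, \qquad M_t := \sum_{i=0}^{s-t} K_iJ_{s-i},
\]
so $L_0=L$, $M_0=M$ are the two sides of the intersection to be computed, and $L_s = I_0J_s = J_s = K_0J_s = M_s$ by the hypothesis $I_0=K_0=A$. The key claim is that for each $0\le t\le s-1$,
\[
L_t \cap M_t \;=\; (I_{s-t}\cap K_{s-t})\,J_t \;+\; L_{t+1}\cap M_{t+1}.
\]
Iterating this identity from $t=0$ up to $t=s-1$ and appending $L_s \cap M_s = (I_0\cap K_0)J_s$ gives $L\cap M = \sum_{t=0}^s(I_{s-t}\cap K_{s-t})J_t$, which is the desired formula after the substitution $i = s-t$.

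To prove the claim, take $x\in L_t\cap M_t$ and decompose with respect to the top summand: write $x = y+y' = z+z'$ with $y\in I_{s-t}J_t$, $y'\in L_{t+1}$, $z\in K_{s-t}J_t$, $z'\in M_{t+1}$. Since $s-i\ge t+1$ for every $i$ appearing in $L_{t+1}$ and $M_{t+1}$, we have $L_{t+1}, M_{t+1} \subseteq J_{t+1}$, so $y-z = z'-y' \in J_{t+1}$. On the other hand, $y-z \in (I_{s-t}+K_{s-t})J_t$, which by Lemma \ref{lem_intersection} equals $(I_{s-t}+K_{s-t})\cap J_t$ inside $R$. Since $J_{t+1}\subseteq J_t$, intersecting with $J_{t+1}$ and applying Lemma \ref{lem_intersection} once more yields
\[
y-z \;\in\; (I_{s-t}+K_{s-t}) \cap J_{t+1} \;=\; (I_{s-t}+K_{s-t})J_{t+1}.
\]
Decompose $y-z = u+v$ with $u\in I_{s-t}J_{t+1}$, $v\in K_{s-t}J_{t+1}$. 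Then $y-u = z+v \in I_{s-t}J_t \cap K_{s-t}J_t = (I_{s-t}\cap K_{s-t})J_t$ by a third use of Lemma \ref{lem_intersection}. Moreover $u \in I_{s-t}J_{t+1}\subseteq I_{s-t-1}J_{t+1}$, which is precisely the $i=s-t-1$ summand of $L_{t+1}$, so $u+y'\in L_{t+1}$; symmetrically $z'-v\in M_{t+1}$; and $u+y' = z'-v$ from $y-z=z'-y'$. Hence $x = (y-u)+(u+y')$ is the required decomposition.

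The main obstacle is the chain of three applications of Lemma \ref{lem_intersection}, alternating between the product form $IJ$ and the intersection form $I\cap J$, used to slide $J_t$ down to $J_{t+1}$ at exactly the right step; this is what allows the $v$-term and the $u$-term to be absorbed cleanly into the next level of the sum without losing ideal membership. The assumption $I_0=K_0=A$ is used to terminate the recursion at $L_s\cap M_s=J_s$, and $J_0=B$ aligns the $t=0$ summand of the output with the $i=s$ summand of the stated formula.
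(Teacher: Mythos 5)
Your proof is correct, and it takes a genuinely different route from the paper's. The paper proves the lemma by induction on $s$: the base case invokes Lemma \ref{lem_intersection_sum}, and the inductive step first extracts an element $x_1 \in I_s \cap K_s$ via that lemma, then passes to the shifted filtration $M_0 = J_0$, $M_i = J_{i+1}$ for $i \ge 1$ to apply the inductive hypothesis at level $s-1$, and finally intersects with $J_1$ using Lemma \ref{lem_intersection_binomsum_1}. You instead unroll the recursion into a single-pass peeling argument and prove the one-step reduction $L_t \cap M_t \subseteq (I_{s-t}\cap K_{s-t})J_t + L_{t+1}\cap M_{t+1}$ directly by three applications of Lemma \ref{lem_intersection}, with an explicit construction of the decomposition $x = (y-u) + (u+y')$ from the two given decompositions of $x$. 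This sidesteps both auxiliary Lemmas \ref{lem_intersection_sum} and \ref{lem_intersection_binomsum_1} and avoids the bookkeeping of a modified filtration, at the cost of a somewhat denser inline manipulation. The paper's proof is more modular (the helper lemmas are isolated and reusable); yours is more self-contained and makes the decomposition constructive rather than existential. One small presentational point: you state the key claim as an equality but prove only the inclusion $\subseteq$; this is all the iteration needs (the reverse inclusion of the claim, like that of the lemma itself, is immediate from $L_{t+1}\subseteq L_t$, $M_{t+1}\subseteq M_t$, and $(I_{s-t}\cap K_{s-t})J_t \subseteq L_t\cap M_t$), but it would be cleaner to either state the claim as an inclusion or note the trivial direction explicitly. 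Also, your closing remark that $J_0=B$ merely "aligns" the $t=0$ summand while $I_0=K_0=A$ "terminates the recursion" is accurate, and in fact your argument works without these normalizations since $L_s\cap M_s = I_0J_s \cap K_0J_s = (I_0\cap K_0)J_s$ by the same Lemma \ref{lem_intersection} reasoning; this makes your version slightly more general than strictly needed.
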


\begin{proof}
	We shall use induction on $s\ge 1$. When $s=1$, the desired statement is
	\[
	(I_1+J_1)\cap (K_1+J_1)=(I_1\cap K_1)+J_1,
	\]
	which is true by Lemma \ref{lem_intersection_sum}. Assume that $s\ge 2$. Clearly,
	\begin{equation}
		\label{eq_intersection}
		\left(\sum_{i=0}^s I_iJ_{s-i}\right)~ \bigcap ~\left(\sum_{i=0}^s K_iJ_{s-i}\right)  \supseteq \sum_{i=0}^s (I_i\cap K_i) J_{s-i}.
	\end{equation}

	For the reverse containment, consider any element $x$ in the left hand side. Note that $x\in \sum_{i=0}^s I_iJ_{s-i} \subseteq I_s+J_1$. Similarly, $x\in K_s+J_1$. Thus, Lemma \ref{lem_intersection_sum} yields
	\[
	x\in (I_s+J_1)\cap (K_s+J_1)=(I_s\cap K_s)+J_1.
	\]
Write $x=x_1+x_2$, where $x_1 \in I_s \cap K_s$ and $x_2 \in J_1$. Then $x-x_1 =x_2\in J_1$. Since $x_1 \in I_s$, we have
		\[
		x-x_1\in \sum_{i=0}^s I_iJ_{s-i} +I_s = \sum_{i=0}^s I_iJ_{s-i} \subseteq \sum_{i=0}^{s-2} I_iJ_{s-i}+I_{s-1}.
		\]
Similarly, $x-x_1\in \sum_{i=0}^{s-2} K_iJ_{s-i}+K_{s-1}$.

Set $L_i=I_i\cap K_i$ for $i \geq 0$. Take $M_0=J_0$ and $M_i=J_{i+1}$ for all $i \geq 1$. Observe that $\{M_j\}_{j \ge 0}$ is a filtration of ideals of $B$. Also, note that \[\sum_{i=0}^{s-2} I_iJ_{s-i}+I_{s-1}=\sum_{i=0}^{s-1} I_iM_{s-1-i} \text{ and }  \sum_{i=0}^{s-2} K_iJ_{s-i}+K_{s-1} =\sum_{i=0}^{s-1} K_iM_{s-1-i}.\] Thus, by the induction hypothesis,
		\[
		\left(\sum_{i=0}^{s-1} I_iM_{s-1-i} \right) ~ \bigcap ~ \left(\sum_{i=0}^{s-1} K_iM_{s-1-i} \right) =\sum_{i=0}^{s-1} L_iM_{s-1-i} =\sum_{i=0}^{s-2}L_iJ_{s-i} +L_{s-1}.
		\]
Furthermore, thanks to Lemma \ref{lem_intersection_binomsum_1}, we have
		\[
		J_1 ~ \bigcap ~ \left(\sum_{i=0}^{s-2} L_iJ_{s-i}+L_{s-1}\right)=\sum_{i=0}^{s-1} L_iJ_{s-i}.
		\]
Therefore,  $x-x_1 \in \sum_{i=0}^{s-1} L_iJ_{s-i}$. It follows that $x \in \sum_{i=0}^{s} L_iJ_{s-i}$, as $x_1 \in L_s$.
		In other words, $x$ is contained in the right hand side of \eqref{eq_intersection}. This completes the induction and the proof of the lemma.
\end{proof}

\begin{lemma}\label{colon}
Let $\{I_i\}_{i \ge 0}$ and $\{J_j\}_{j \ge 0}$ be filtrations of ideals in $A$ and $B$, respectively, with $I_0 = A$ and $J_0 = B$. Let $a\in A$ be an element and let $K \subseteq A$ be an ideal. Then,
	\begin{enumerate}[\quad \rm (1)]
		\item $\displaystyle  \left(\sum_{i=t}^s I_i J_{s-i} \right):_R a \subseteq  (I_t :_A a)J_{s-t}+\left(\sum_{i=t+1}^s  I_i J_{s-i}\right):_R a$ for any $0 \leq t \leq s-1$.
		\item $\displaystyle  \left(\sum_{i=0}^s I_i J_{s-i} \right):_R K =  \sum_{i=0}^s \left( I_i :_A K\right) J_{s-i}.$
	\end{enumerate}
\end{lemma}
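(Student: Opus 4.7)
My plan is to prove both parts by reducing to a coordinate computation in the tensor decomposition $R = A \otimes_\kk B$ via a $\kk$-basis of $B$ adapted to the finite filtration $J_0 = B \supseteq J_1 \supseteq \cdots \supseteq J_s$. First, I would choose a $\kk$-basis $\{e_\mu\}_{\mu \in \Lambda}$ of $B$ together with a nested chain $\Lambda = \Lambda_0 \supseteq \Lambda_1 \supseteq \cdots \supseteq \Lambda_s$ of subsets such that $\{e_\mu\}_{\mu \in \Lambda_j}$ is a $\kk$-basis of $J_j$ for every $0 \le j \le s$; such a system is obtained by successively extending bases from $J_s$ up to $J_0 = B$. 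Setting $\epsilon_\mu = 1 \otimes e_\mu$, one has $A$-module direct-sum decompositions
\[
R = \bigoplus_{\mu \in \Lambda} A \epsilon_\mu, \qquad I_i R = \bigoplus_{\mu \in \Lambda} I_i \epsilon_\mu, \qquad J_j R = \bigoplus_{\mu \in \Lambda_j} A \epsilon_\mu,
\]
and Lemma \ref{lem_intersection} then identifies $I_i J_j = I_i R \cap J_j R = \bigoplus_{\mu \in \Lambda_j} I_i \epsilon_\mu$.

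Next, setting $j(\mu) = \max\{j : \mu \in \Lambda_j\}$ and recalling that $\{I_i\}$ is a decreasing filtration, a coordinate-wise accounting yields the key identification
\[
\sum_{i=t}^s I_i J_{s-i} = \bigoplus_{\mu \in \Lambda} I_{\max(t,\, s - j(\mu))}\, \epsilon_\mu,
\]
because the $\mu$-th component collects ideals $I_i$ for those $i \in \{t,\ldots,s\}$ with $\mu \in \Lambda_{s-i}$, i.e.\ with $i \ge s - j(\mu)$, and the sum over a decreasing chain of ideals equals the largest one.

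Colon with $a \in A$ or with $K \subseteq A$ distributes over this direct sum, so both sides of (1) and (2) expand in the same basis. For (1), the $\mu$-component of the right-hand side is $(I_t :_A a) + (I_{\max(t+1,\, s-j(\mu))} :_A a)$ when $\mu \in \Lambda_{s-t}$ and $(I_{\max(t+1,\, s-j(\mu))} :_A a)$ otherwise; a case split on whether $j(\mu) \ge s-t$ shows that in either case this component equals $(I_{\max(t,\, s-j(\mu))} :_A a)$, matching the left side coordinate by coordinate (indeed yielding equality). For (2), the same computation with $K$ in place of $a$ produces the desired equality; alternatively, one iterates the ideal version of (1) from $t = 0$ to $t = s-1$, ending with $(I_s :_R K) = (I_s :_A K)\, R$ from flatness of $R$ over $A$, while the reverse containment is immediate from $K\cdot(I_i :_A K) \subseteq I_i$. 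The main subtlety is the direct-sum identification of $\sum_i I_i J_{s-i}$, which rests crucially on Lemma \ref{lem_intersection} to realize ideal products as intersections of extended ideals, and hence as direct sums in the adapted basis.
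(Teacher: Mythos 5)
Your proof is correct and takes a genuinely different route from the paper's.

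The paper proves (1) by a direct element-chase: take $x$ in the colon, observe $x\in (I_t:_A a)R$, split $xa$ across $I_tJ_{s-t}$ and the tail, and use Lemma \ref{lem_intersection} once to peel off a $z\in (I_t:_A a)J_{s-t}$ so that $(x-z)a$ lands in the tail. For (2) the paper first iterates (1) for a principal generator, and then for a general ideal $K=(a_1,\dots,a_d)$ it intersects over the generators and invokes Lemma \ref{lem_intersection_binomsum_long}, which is itself proved by induction on $s$ via Lemma \ref{lem_intersection_sum}.

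Your argument instead exploits the $\kk$-module freeness of $B$ to write $R=\bigoplus_{\mu}A\epsilon_\mu$ with the basis adapted to the finite chain $J_0\supseteq\cdots\supseteq J_s$, identifies $\sum_{i=t}^s I_iJ_{s-i}=\bigoplus_\mu I_{\max(t,\,s-j(\mu))}\epsilon_\mu$ (this is where Lemma \ref{lem_intersection} enters, to turn $I_iJ_j$ into $I_iR\cap J_jR$), and then computes both sides of (1) and (2) coordinate-by-coordinate, using that colon by $a$ or by $K\subseteq A$ commutes with the $A$-module direct sum. This is correct: the relevant facts you use are that the system $\{I_i\}$ (resp.\ $\{I_i:_A K\}$) is a descending chain with $I_0=A$ (resp.\ $I_0:_A K=A$), so that a sum over a tail of indices collapses to the leading term; the full filtration (product) property of $\{I_i:_A K\}$, which might be delicate, is not actually needed. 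Your approach bypasses Lemma \ref{lem_intersection_binomsum_long} entirely and, as a bonus, shows that the containment in (1) is in fact an equality, a slightly stronger statement than the paper records. The trade-off is that your proof leans on the tensor-decomposition picture over a field $\kk$, while the paper's argument is purely ideal-theoretic (modulo Lemma \ref{lem_intersection}); in the context of this paper, where $\kk$ is a field throughout, both are equally applicable.
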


\begin{proof} (1) Let $x \in \left(\sum_{i=t}^s I_i J_{s-i} \right):_R a$ be any element. Then, $xa \in \left(\sum_{i=t}^s I_i J_{s-i} \right) \subseteq I_t$ since $\{I_i\}_{i \ge 0}$ is a filtration of ideals in $R$. Consequently, $x \in I_t :_R a= (I_t:_A a)R$.
	
	Also, $xa \in I_tJ_{s-t}+\left(\sum_{i= t+1}^s I_i J_{s-i} \right)$. Thus, there exists $y \in \sum_{i=t+1}^s I_i J_{s-i} $ such that $xa-y \in I_tJ_{s-t} \subseteq J_{s-t}R.$ Note that $\{I_i\}_{i \ge 0}$ is a filtration, and so $y \in I_{t+1}R$.  Therefore, $xa-y \in \left((I_t:_A a) a+ I_{t+1}\right)R$, which implies that
	\begin{align*}
		xa-y & \in \left( (I_t:_A a) a+ I_{t+1}\right)R \cap J_{s-t}R \\
		&= \left( (I_t:_A a) a+ I_{t+1}\right) J_{s-t} \quad \text{(by Lemma \ref{lem_intersection})}\\
		&=(I_t:_A a) aJ_{s-t}+ I_{t+1}J_{s-t}.
	\end{align*}

	Next, choose $z \in (I_t:_A a)J_{s-t}$ such that $xa-y -za \in I_{t+1}J_{s-t} \subseteq I_{t+1}J_{s-t-1}$, where the last inclusion follows from the fact that $\{J_j\}_{j \ge 0}$ is a filtration. In particular, 
$$(x-z)a \in (y) + I_{t+1}J_{s-t-1} \subseteq  \sum_{i=t+1}^s I_i J_{s-i}.$$ It follows that  $x \in (I_t:_A a)J_{s-t} + \left( \sum_{i=t+1}^s I_i J_{s-i}\right):_R a$, as desired.
	
(2) Consider first the case where $K=(a)$ is a principal ideal.  By a recursive use of part (1), we get
	$$ \left(\sum_{i=t}^s I_i J_{s-i} \right):_R a \subseteq \sum_{i=t}^s \left( I_i :_A a\right) J_{s-i} \text{ for } 0 \leq t \leq s-1.$$
	The reverse containment follows from the fact that, for every $ 0 \leq i \leq s$,
	$$( I_i :_A a)J_{s-i} \subseteq (I_iJ_{s-i}):_R a \subseteq \left(\sum_{i=0}^s I_i J_{s-i} \right):_R a.$$
	Hence, the assertion holds when $K$ is a principal ideal.
	
	Consider the general case where $K$ is an arbitrary ideal. Since $A$ is Noetherian, $K$ is finitely generated, i.e., $K=(a_1,\ldots,a_d)$ for $a_1, \dots, a_d \in A$. Applying Lemma \ref{lem_intersection_binomsum_long} and making use of the case where $K$ is principal, we obtain
	\begin{align*}
		\left(\sum_{i=0}^s I_i J_{s-i} \right):_R K  &=\bigcap_{t=1}^d \left(\sum_{i=0}^s I_i J_{s-i} :_R a_t \right)
		 =\bigcap_{t=1}^d \left(\sum_{i=0}^s (I_i :_A a_t) J_{s-i} \right)\\
		&= \sum_{i=0}^s \left(\bigcap_{t=1}^d (I_i :_A a_t)\right) J_{s-i}
		 = \sum_{i=0}^s \left( I_i :_A K\right) J_{s-i}.
	\end{align*}
	The conclusion follows.
\end{proof}

Our main result, the binomial expansion for saturated powers of $(I+J)$, is completed with the final step in the following statement.

\begin{theorem}
\label{binomial_expansion}
	Let $ I, K \subseteq A$, and $J, L \subseteq B$ be ideals. For any $s \in \NN$, we have
	$$ \displaystyle (I+J)^{(s)}_{KL}=\sum_{i=0}^s I^{(i)}_K J^{(s-i)}_L .$$
\end{theorem}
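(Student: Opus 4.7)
The inclusion $\sum_{i=0}^s I^{(i)}_K J^{(s-i)}_L \subseteq (I+J)^{(s)}_{KL}$ is already established in Proposition \ref{prop_sub}, so the plan is to prove the reverse containment. The strategy is to compute the colon ideal $(I+J)^s :_R (KL)^t$ explicitly for each positive integer $t$, and then exploit Noetherianity to pass to the union.

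The first step is to observe that, since $K$ is generated by elements of $A$ and $L$ by elements of $B$, and these commute in $R$, one has $(KL)^t = K^t L^t$ as ideals of $R$. This allows splitting the colon as
\[
(I+J)^s :_R (KL)^t \;=\; \bigl((I+J)^s :_R K^t\bigr) :_R L^t.
\]
Next, using the ordinary binomial expansion $(I+J)^s = \sum_{i=0}^s I^i J^{s-i}$ (valid because $\{I^i\}$ and $\{J^j\}$ are filtrations with $I^0 = A$, $J^0 = B$), an application of Lemma \ref{colon}(2) with the ideal $K^t\subseteq A$ gives
\[
(I+J)^s :_R K^t \;=\; \sum_{i=0}^s (I^i :_A K^t)\,J^{s-i}.
\]
The ideals $\{I^i :_A K^t\}_{i \ge 0}$ again form a filtration with zeroth term $A$, so a symmetric version of Lemma \ref{colon}(2)—obtained by swapping the roles of $A$ and $B$, which is legitimate because the hypothesis $R = A\otimes_\kk B$ is symmetric in its factors—can be applied to the ideal $L^t\subseteq B$. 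Combining yields, for every $t\ge 1$,
\[
(I+J)^s :_R (KL)^t \;=\; \sum_{i=0}^s (I^i :_A K^t)\,(J^{s-i} :_B L^t).
\]

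Finally, Noetherianity of $A$ and $B$ ensures that the ascending chains $\{I^i :_A K^t\}_t$ and $\{J^{s-i} :_B L^t\}_t$ stabilize. I would choose $t$ large enough that $I^i :_A K^t = I^{(i)}_K$ and $J^{s-i} :_B L^t = J^{(s-i)}_L$ for every $0\le i\le s$ simultaneously, and large enough that $(I+J)^s :_R (KL)^t = (I+J)^{(s)}_{KL}$ (all possible since we have finitely many chains). Substituting into the boxed identity then yields the desired equality.

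The main obstacle is really just the symmetric use of Lemma \ref{colon}(2): that lemma is stated only for an ideal $K\subseteq A$, whereas here we must also colon by an ideal of $B$. Fortunately, the proof of Lemma \ref{colon} nowhere uses anything asymmetric in $A$ and $B$, so invoking the mirror statement is routine. A secondary technical point—interchanging the union over $t$ with the finite sum—is avoided entirely by the uniform stabilization argument above, rather than by manipulating the union directly.
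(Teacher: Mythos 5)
Your proposal follows essentially the same route as the paper's proof: compute $(I+J)^s :_R (KL)^t = \bigl((I+J)^s :_R K^t\bigr) :_R L^t$ via Lemma \ref{colon}(2) and its $A$/$B$-symmetric counterpart, then invoke Noetherianity and Proposition \ref{prop_sub}. The only (cosmetic) difference is that you choose $t$ large enough to make every colon in sight stabilize simultaneously, giving the equality outright, whereas the paper fixes $t$ only so that $(I+J)^{(s)}_{KL} = (I+J)^s :_R (KL)^t$ and then obtains the reverse inclusion from $(I^i :_A K^t) \subseteq I^{(i)}_K$ and $(J^{s-i} :_B L^t) \subseteq J^{(s-i)}_L$; both are correct.
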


\begin{proof}
	Since $R$ is Noetherian, there exists a positive integer $t$ such that $(I+J)^{(s)}_{KL}= (I+J)^s:_R (KL)^{t}$. By applying Lemma \ref{colon}, we then obtain the following inclusion of ideals in $R$:
	\begin{align*}
		(I+J)^s:_R (KL)^{t} & =  (I+J)^s:_R K^tL^t 
		= \left((I+J)^s:_R K^t\right):_RL^t \\
		& = \left(\sum_{i=0}^s \left( I^i :_A K^t\right) J^{s-i} \right):_R L^t
		 = \sum_{i=0}^s \left( I^i :_A K^t\right) \left(J^{s-i} :_B L^t\right) \\
		& \subseteq \sum_{i=0}^s I^{(i)}_K J^{(s-i)}_L.
	\end{align*}
This, together with the inclusion in Proposition \ref{prop_sub}, establishes the desired equality.
\end{proof}

As an immediate consequence of Theorem \ref{binomial_expansion}, we obtain the following corollary.

\begin{corollary} \label{cor.sat=power}
	Let $ I, K$ be ideals of $A$, and let $J,L$ be ideals of $B$. If $I^{(i)}_K =I^i$ and $J^{(i)}_L =J^i$ for all $i \leq s$, then $ (I+J)^{(s)}_{KL}=(I+J)^s$.
\end{corollary}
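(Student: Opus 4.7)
The proof is a short, direct application of Theorem \ref{binomial_expansion} combined with the classical binomial expansion for ordinary powers of a sum of ideals. The plan is to chain together three equalities.

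First, I would invoke Theorem \ref{binomial_expansion} to rewrite the left-hand side as
\[
(I+J)^{(s)}_{KL} = \sum_{i=0}^s I^{(i)}_K J^{(s-i)}_L.
\]
Next, using the hypothesis that $I^{(i)}_K = I^i$ and $J^{(i)}_L = J^i$ for all $i \le s$, I would substitute term-by-term (noting that $s-i \le s$ as well, so the hypothesis applies to $J^{(s-i)}_L$) to obtain
\[
\sum_{i=0}^s I^{(i)}_K J^{(s-i)}_L = \sum_{i=0}^s I^i J^{s-i}.
\]
Finally, I would identify the right-hand side with $(I+J)^s$ via the standard binomial expansion of powers of a sum of two ideals in a commutative ring, namely $(I+J)^s = \sum_{i=0}^s I^i J^{s-i}$, which is a well-known elementary fact (and which is implicitly used throughout the paper in the form of the multiplicative filtration $\{(I+J)^n\}$).

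There is no substantial obstacle here; the content of the corollary is entirely packed into Theorem \ref{binomial_expansion}, and this statement merely records the special case where each saturated power on the right-hand side coincides with the ordinary power. The only minor point to check is that the hypothesis covers the index range $0,1,\dots,s$ for both filtrations, which it does by assumption (using also the convention $I^{(0)}_K = A$ and $J^{(0)}_L = B$ implicit in Definition \ref{def.satpower} when $i=0$ or $i=s$).
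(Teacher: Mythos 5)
Your argument is correct and is exactly the paper's proof, merely spelled out in more detail: the paper's proof simply says the result is ``straightforward from the binomial expansion'' of Theorem \ref{binomial_expansion}, and your chain of equalities (apply the theorem, substitute the hypotheses, recognize $\sum_{i=0}^s I^i J^{s-i} = (I+J)^s$) is precisely that straightforward verification.
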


\begin{proof} The assertion is straightforward from the binomial expansion of $(I+J)^{(s)}_{KL}$ given in Theorem \ref{binomial_expansion}.
\end{proof}

The following result says that the converse of Corollary \ref{cor.sat=power} also holds in the most relevant cases, for example, when each of $A$ and $B$ is either a domain or a standard graded $\kk$-algebra,  and $I$ and $J$ are proper, non-nilpotent ideals. This result is a generalization of \cite[Corollary 3.5]{HNTT}.

\begin{theorem} \label{thm.equalityConverse}
	Let $I, K \subseteq A$ and $J, L \subseteq B$ be ideals, and let $s\in \NN$. Assume that $I^{s-1}\neq I^s$ and $J^{s-1}\neq J^s$. Then, $ (I+J)^{(s)}_{KL} =(I+J)^s$ implies that $I^{(i)}_K =I^i$ and $J^{(i)}_L =J^i$ for all $i \leq s$.
\end{theorem}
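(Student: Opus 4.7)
My plan is to deduce $I^{(i)}_K\subseteq I^i$ for each $1\le i\le s$ (the case $i=0$ being trivial since both sides equal $A$), and then obtain $J^{(i)}_L\subseteq J^i$ by swapping the roles of $(A,I,K)$ and $(B,J,L)$. The idea is, given $u\in I^{(i)}_K$, to multiply $u$ by a ``test element'' $v\in J^{s-i}\setminus J^{s-i+1}$ and then reduce modulo $J^{s-i+1}R$. This kills all low-$j$ terms in the binomial expansion of $(I+J)^s$ and confines the surviving terms to $I^iR$, at which point the flatness of $R$ over $B$ (and over $\kk$) will force $u\in I^i$.

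First note that such a $v$ exists: since $J^{s-1}\neq J^s$, multiplying any hypothetical identity $J^{k-1}=J^k$ by $J^{s-k}$ would contradict it, so $J^{s-i}\supsetneq J^{s-i+1}$ for every $1\le i\le s$. Using $J^{s-i}\subseteq J^{(s-i)}_L$, Theorem~\ref{binomial_expansion} together with the hypothesis give
\[
uv\in I^{(i)}_K\cdot J^{(s-i)}_L\subseteq (I+J)^{(s)}_{KL}=(I+J)^s=\sum_{j=0}^s I^j J^{s-j}.
\]
Write $uv=\sum_{j=0}^s c_j$ with $c_j\in I^j J^{s-j}$, and reduce modulo $J^{s-i+1}R$. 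Flatness of $B\to R=A\otimes_\kk B$ (which is automatic over the field $\kk$) yields $J^{s-i+1}R=A\otimes_\kk J^{s-i+1}$, hence $R/J^{s-i+1}R=A\otimes_\kk(B/J^{s-i+1})$. For $j<i$ one has $s-j\ge s-i+1$, so $c_j\in J^{s-i+1}R$ dies in the quotient; for $j\ge i$ one has $c_j\in I^i R$. Consequently the class of $uv$ in $R/J^{s-i+1}R$ lies in the image of $I^iR$, and its further image in $(A/I^i)\otimes_\kk(B/J^{s-i+1})$ is zero.

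That image, however, equals the simple tensor $\bar u\otimes\bar v$, where $\bar u$ denotes the class of $u$ in $A/I^i$ and $\bar v$ the class of $v$ in $B/J^{s-i+1}$. Since $v\notin J^{s-i+1}$, $\bar v\neq 0$; extending $\bar v$ to a $\kk$-basis of $B/J^{s-i+1}$ splits $(A/I^i)\otimes_\kk(B/J^{s-i+1})$ as a direct sum of copies of $A/I^i$, and projecting onto the $\bar v$-factor forces $\bar u=0$, i.e., $u\in I^i$. The main delicate point--and the only real obstacle--is the reduction step: one has to track carefully which products $I^jJ^{s-j}$ vanish modulo $J^{s-i+1}R$ and which drop into $I^iR$, and one must invoke flatness over $\kk$ both to identify $R/J^{s-i+1}R=A\otimes_\kk(B/J^{s-i+1})$ and to pass from $\bar u\otimes\bar v=0$ with $\bar v\neq 0$ to $\bar u=0$. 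The two hypotheses $I^{s-1}\neq I^s$ and $J^{s-1}\neq J^s$ enter precisely as what guarantees the existence of the separating element $v$ for the $I$-part and, symmetrically, for the $J$-part.
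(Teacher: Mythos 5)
Your proof is correct, and it takes a genuinely different route from the paper. The paper's argument is prime-theoretic: from $(I+J)^{(s)}_{KL}=(I+J)^s$ it deduces that $KL\not\subseteq P$ for every $P\in\Ass_R((I+J)^s)$, then uses the description of $\Ass_R((I+J)^s)$ in Lemma~\ref{lem_ass_powers} (together with Lemma~\ref{lem_Ass_tensor}) to show that $K\not\subseteq\pp$ for every $\pp\in\Ass_A(I^{j-1}/I^j)$ with $j\le i$, and then appeals to $\Ass_A(A/I^i)\subseteq\bigcup_{j\le i}\Ass_A(I^{j-1}/I^j)$ to conclude $I^{(i)}_K=I^i$; the hypotheses $I^{s-1}\neq I^s$ and $J^{s-1}\neq J^s$ enter there by guaranteeing that the relevant factor modules $I^{j-1}/I^j$ and $J^{k-1}/J^k$ are nonzero, so that the associated-prime sets one pairs up are nonempty. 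Your proof avoids Lemma~\ref{lem_ass_powers} entirely and is more elementary and self-contained: you only need the easy inclusion $I^{(i)}_K J^{(s-i)}_L\subseteq(I+J)^{(s)}_{KL}$ (Proposition~\ref{prop_sub} suffices), the ordinary binomial expansion $(I+J)^s=\sum_j I^jJ^{s-j}$, the $\kk$-linear identification $R/(I^iR+J^{s-i+1}R)\cong(A/I^i)\otimes_\kk(B/J^{s-i+1})$, and the standard fact that a nonzero pure tensor over a field is nonzero. The test element $v\in J^{s-i}\setminus J^{s-i+1}$ exists by the chain-length observation you make from $J^{s-1}\neq J^s$, and it neatly isolates $\bar u\otimes\bar v$ in the quotient, forcing $u\in I^i$; the symmetric argument with $I^{s-1}\neq I^s$ handles $J^{(i)}_L$. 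This buys a lower-tech proof that does not depend on the associated-prime machinery of \cite{NT}, at the (minor) cost of invoking the tensor-product splitting over $\kk$ explicitly.
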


\begin{proof}
Since $I^{s-1}\neq I^s$ and $J^{s-1}\neq J^s$, $I\subseteq A$ and $J\subseteq B$ are nonzero proper ideals.	Since $(I+J)^{(s)}_{KL}=(I+J)^s$, we have
	$\Ass_R((I+J)^{(s)}_{KL}) = \Ass_R((I+J)^s).$
	Therefore, $KL \not\subseteq P$ for any $P \in \Ass_R((I+J)^s)$. This implies that $K, L  \not\subseteq P$ for any $P \in \Ass_R((I+J)^s)$.

	Fix $1 \leq i \leq s$. Since $I^{s-1}\neq I^s$, it follows that $I^{i-1}\neq I^i$, and hence $\Ass_A(I^{i-1}/I^i) \neq \emptyset$. Similarly $\Ass_B(J^{j-1}/J^j) \neq \emptyset$ for any $1\le j\le s$. Consider any $\pp \in \Ass_A(I^{i-1}/I^i)$. Observe that if $K \subseteq \pp$ then, for any $\qq \in \Ass_B(J^{s-i}/J^{s-i+1})$, we have $K \subseteq P$ for all $P \in \Min_R(\pp+\qq)$. Moreover, by Lemma \ref{lem_ass_powers}, we have
	\[
	\bigcup_{i=1}^s \mathop{\bigcup_{\pp \in \Ass_A\left(I^{i-1}/I^i\right)}}_{\qq \in \Ass_B\left(J^{s-i}/J^{s-i+1}\right)} \Min_R(\pp+\qq) \subseteq \Ass_R((I+J)^s).
	\]
	Thus, any such $P$ is an element of $\Ass_R((I+J)^s)$, and this yields a contradiction. Therefore, $K \not\subseteq \pp$ for every $\pp \in \Ass_A(I^{i-1}/I^i)$. Since
	$$
	\Ass_A(A/I^i)\subseteq \bigcup_{j=1}^i\Ass_A(I^{j-1}/I^j),
	$$
	it follows that $K\not\subseteq \pp$ for any $\pp \in \Ass_A(A/I^i)$. We deduce by definition that $I^{(i)}_K=I^i$. Similarly, we get $J^{(i)}_L=J^i$ for all $1 \leq i \leq s$. The theorem is proved.
\end{proof}

Theorem \ref{binomial_expansion} gives a formula for the saturated powers of $(I+J)$ with respect to a product $KL$, where $K \subseteq A$ and $L \subseteq B$. The following question arises naturally: Let $ I$ and $J$ be ideals in $A$ and $B$ respectively. Let $E$ be an ideal in $R$. Do there exist ideals $K \subseteq A$ and $L \subseteq B$ such that
	$$ \displaystyle (I+J)^{(s)}_E=\sum_{i=0}^s I^{(i)}_K J^{(s-i)}_L ?$$
\noindent This question has a negative answer in general, as the next example demonstrates.

\begin{example}
Take $A = \kk[x, y], I = (x^2, xy), B = \kk[z, t], J = (z^2 , zt)$. Take $E = (x, y, z, t)$ in
$R = A \otimes_{\kk} B = \kk[x, y, z, t]$. Then $(I + J) :_R E^\infty = (xz, x^2, xy, z^2, zt)$. This is because $I+J$ admits the following primary decomposition
\[
I + J = (x, z) \cap (x, z^2 , t) \cap (x^2 , y, z) \cap (x^2 , y, z^2 , t),
\]
and $E = (x, y, z, t)$ is contained in only the radical of the last of these primary ideals. Hence,
\[
(I + J) :_R E^\infty = (x, z) \cap (x, z^2, t) \cap (x^2 , y, z) = (xz, x^2 , xy, z^2 , zt).
\]

On the other hand, assume that there exist ideals $K$ and $L$ in $A$ and $B$, respectively, such
that $(I + J) :_R E^\infty = (I :_A K^\infty) + (J :_B L^\infty)$. Observe that $I = (x) \cap (x^2 , y)$, so $I :_A K^\infty$ can
only be either $(x)$, $(x^2 , xy)$ or $(1)$.

Similarly $J :_B L^\infty$ can only be either $(z)$, $(z^2, zt)$ or $(1)$. In any case, we cannot
have $(I :_A K^\infty) + (J :_B L^\infty)$ being equal to $(xz, x^2 , xy, z^2 , zt)$.
\end{example}


\section{Binomial expansion for symbolic powers} \label{sec.applications}

In this section, we apply Theorem \ref{binomial_expansion} to obtain the binomial expansion (\ref{eq.bin}) for both known notions of symbolic powers. We further derive formulas for the depth and regularity of saturated powers of sums of ideals, generalizing those given in \cite{HNTT} for symbolic powers. Throughout this section, $A$ and $B$ shall denote Noetherian $\kk$-algebras such that $R = A \otimes_\kk B$ is also Noetherian.

The following binomial expansion was given in \cite{HNTT} for symbolic powers defined using minimal primes. We provide an alternative proof that, at the same time, works also for symbolic powers defined using all associated primes.

\begin{theorem}
	\label{thm.symbBIN}
	Let $I \subseteq A$ and $J \subseteq B$ be nonzero proper ideals. Then, for any $s \in \NN$, we have
	\begin{align} \label{eq.symbBIN}
		(I+J)^{(s)} = \sum_{i=0}^s I^{(i)}J^{(s-i)}.
	\end{align}
	Here, the equality is valid for both known definitions of symbolic powers.
\end{theorem}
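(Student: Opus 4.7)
The plan is to apply Theorem \ref{binomial_expansion} with carefully chosen auxiliary ideals $K$ and $L$, and then identify both sides of the resulting identity using the saturation descriptions of symbolic powers from Section \ref{sec.powers}. The whole proof is essentially an assembly: the heavy lifting has already been done by Theorem \ref{binomial_expansion} and by Lemmas \ref{lem.symbSat_min}, \ref{lem.symbSat_ass}, \ref{lem_satpower_min_symbpower}, \ref{lem_satpower_ass_symbpower}. The two cases (minimal primes versus all associated primes) are handled in parallel, only with different choices of $K$ and $L$.

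For the ``minimal'' version, I would set
\[
K = \bigcap_{\pp \in \Ass_A^*(I) \setminus \Min_A(I)} \pp, \qquad L = \bigcap_{\qq \in \Ass_B^*(J) \setminus \Min_B(J)} \qq,
\]
using the convention that an empty intersection is the whole ring. Lemma \ref{lem.symbSat_min}, applied to $I$ with the ideal $K$ and to the power $I^i$ for each $0\le i\le s$ (noting that $K$ is built from $\Ass_A^*(I)$, which contains $\Ass_A(I^i)$), gives $I^{(i)}_K = \ \up{m}I^{(i)}$. Similarly $J^{(s-i)}_L = \ \up{m}J^{(s-i)}$. On the left-hand side of (\ref{eq.binsat}), Lemma \ref{lem_satpower_min_symbpower} gives $(I+J)^{(s)}_{KL} = \ \up{m}(I+J)^{(s)}$. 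Plugging these identifications into Theorem \ref{binomial_expansion} yields the desired formula for the ``minimal'' symbolic powers.

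For the ``associated'' version, I would choose
\[
K = \bigcap_{\substack{\pp \in \Ass_A^*(I) \\ \grade(\pp,A/I)\ge 1}} \pp, \qquad L = \bigcap_{\substack{\qq \in \Ass_B^*(J) \\ \grade(\qq,B/J)\ge 1}} \qq,
\]
again with the convention on empty intersections. Now Lemma \ref{lem.symbSat_ass} applied to each power $I^i$ (respectively $J^{s-i}$) gives $I^{(i)}_K = \ \up{a}I^{(i)}$ and $J^{(s-i)}_L = \ \up{a}J^{(s-i)}$, while Lemma \ref{lem_satpower_ass_symbpower} identifies $(I+J)^{(s)}_{KL}$ with $\ \up{a}(I+J)^{(s)}$. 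Substituting these into Theorem \ref{binomial_expansion} produces the expansion for the ``associated'' symbolic powers, completing the proof.

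The only place requiring minor care is the coherence between the ideals $K$, $K_s$ (and $L$, $L_s$) appearing in Lemmas \ref{lem.symbSat_min} and \ref{lem.symbSat_ass}: since these lemmas state an equality $I^{(s)}_K = I^{(s)}_{K_s}$, we are free to use the ``global'' $K$ (built from $\Ass_A^*(I)$) uniformly for all powers $I^i$ in the sum, which is exactly what Theorem \ref{binomial_expansion} requires. There is no real obstacle beyond this bookkeeping; the degenerate edge cases (empty $\Ass_A^*(I)\setminus \Min_A(I)$, etc.) fit automatically because then $K = A$ and $I^{(i)}_K = I^i = \ \up{m}I^{(i)}$ already.
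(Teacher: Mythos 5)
Your proposal matches the paper's own proof essentially verbatim: both choose the same ideals $K$ and $L$ built from $\Ass_A^*(I)$ and $\Ass_B^*(J)$, then chain together Lemma \ref{lem_satpower_ass_symbpower} (resp. \ref{lem_satpower_min_symbpower}), Theorem \ref{binomial_expansion}, and Lemma \ref{lem.symbSat_ass} (resp. \ref{lem.symbSat_min}) to identify each side of \eqref{eq.binsat} with the corresponding symbolic powers. Your remark about using the ``global'' $K$ uniformly across all powers $I^i$ is exactly the point that makes the assembly work, and the paper relies on it implicitly in the same way.
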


\begin{proof}
We shall first establish the equality (\ref{eq.symbBIN}) for symbolic powers that are defined using the associated primes, employing Lemma \ref{lem_satpower_ass_symbpower}.

Set $$K = \bigcap_{\substack{\pp \in \Ass_A^*(I) \\ \grade(\pp,A/I) \ge 1}} \pp \text{ and } L = \bigcap_{\substack{\qq \in \Ass_B^*(J) \\ \grade(\qq,B/J) \ge 1}} \qq.$$

By applying Lemma \ref{lem_satpower_ass_symbpower}, Theorem \ref{binomial_expansion}, and Lemma \ref{lem.symbSat_ass}, we  obtain
	$$\up{a}(I+J)^{(s)}  = (I+J)^{(s)}_{KL} = \sum_{i=0}^s I^{(i)}_{K}J^{(s-i)}_{L} = \sum_{i=0}^s \ \up{a}I^{(i)} \ \up{a}J^{(s-i)}.$$
The proof for symbolic powers defined in terms of minimal primes is similar, employing Lemma \ref{lem_satpower_min_symbpower},  Theorem \ref{binomial_expansion}, and Lemma \ref{lem.symbSat_min}. This completes the proof of the theorem.
\end{proof}

As a consequence of Theorem \ref{thm.symbBIN}, we obtain an analog of \cite[Corollary 3.5]{HNTT} for symbolic powers defined using all associated primes.

\begin{corollary}\label{cor.equality}
	Let $I \subseteq A$ and $J \subseteq B$ be ideals, and let $s\ge 1$ be an integer. Assume that $I^{s-1}\neq I^s$ and $J^{s-1}\neq J^s$. If $\up{a}(I+J)^{(s)} =(I+J)^s$ then $\up{a}I^{(i)} =I^i$ and $\up{a}J^{(i)} =J^i$ for all $1\le i \leq s$.
\end{corollary}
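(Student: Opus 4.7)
The plan is to reduce the corollary directly to Theorem \ref{thm.equalityConverse} by translating the associated-symbolic-power statement into a saturated-power statement via Lemmas \ref{lem.symbSat_ass} and \ref{lem_satpower_ass_symbpower}.

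First I would set
\[
K = \bigcap_{\substack{\pp \in \Ass_A^*(I) \\ \grade(\pp,A/I) \ge 1}} \pp \quad \text{and} \quad L = \bigcap_{\substack{\qq \in \Ass_B^*(J) \\ \grade(\qq,B/J) \ge 1}} \qq,
\]
exactly as in the statement of Lemma \ref{lem_satpower_ass_symbpower}. By that lemma, the hypothesis $\up{a}(I+J)^{(s)} = (I+J)^s$ becomes $(I+J)^{(s)}_{KL} = (I+J)^s$. Note that the hypotheses $I^{s-1}\ne I^s$ and $J^{s-1}\ne J^s$ force $I$ and $J$ to be nonzero proper, so Lemma \ref{lem_satpower_ass_symbpower} indeed applies.

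Next I would invoke Theorem \ref{thm.equalityConverse} with this pair $(K,L)$: since $I^{s-1}\ne I^s$ and $J^{s-1}\ne J^s$, the theorem gives $I^{(i)}_K = I^i$ and $J^{(i)}_L = J^i$ for all $1 \le i \le s$. Finally, Lemma \ref{lem.symbSat_ass} (applied to $I$ and to $J$ separately, with the same ideals $K$ and $L$) identifies $I^{(i)}_K$ with $\up{a}I^{(i)}$ and $J^{(i)}_L$ with $\up{a}J^{(i)}$, which yields the desired conclusion $\up{a}I^{(i)} = I^i$ and $\up{a}J^{(i)} = J^i$ for $1 \le i \le s$.

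There is essentially no obstacle here: the substantive work has already been done in Theorem \ref{thm.equalityConverse} and in the two lemmas identifying associated symbolic powers as saturated powers. The only thing worth double-checking is that the $K$ defined above is the \emph{same} ideal whether viewed as the saturating ideal for $I$ in Lemma \ref{lem.symbSat_ass} or as the $A$-component of the saturating ideal $KL$ for $I+J$ in Lemma \ref{lem_satpower_ass_symbpower} — which is immediate because both lemmas use exactly the same prescription in terms of $\Ass_A^*(I)$ and $\grade$, and analogously for $L$. Thus the three results chain together cleanly.
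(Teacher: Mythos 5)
Your proposal is correct and follows essentially the same route as the paper: both reduce to Theorem~\ref{thm.equalityConverse} by translating between associated symbolic powers and saturated powers. The paper's proof tersely cites Theorem~\ref{thm.equalityConverse}, Theorem~\ref{thm.symbBIN}, and Lemma~\ref{lem_satpower_ass_symbpower}, while you invoke Lemma~\ref{lem.symbSat_ass} directly for the final identification of $I^{(i)}_K$ with $\up{a}I^{(i)}$ — that is in fact the precise ingredient the paper is using (Theorem~\ref{thm.symbBIN} itself relies on Lemma~\ref{lem.symbSat_ass}), so your chain of references is if anything a bit sharper.
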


\begin{proof} 
The hypotheses imply that $I\subseteq A, J\subseteq B$ are nonzero proper ideals. The assertion follows from Theorems \ref{thm.equalityConverse} and \ref{thm.symbBIN}, and Lemma \ref{lem_satpower_ass_symbpower}.
\end{proof}

The condition that $I^{s-1} \neq I^s$ and $J^{s-1} \neq J^s$ in Corollary \ref{cor.equality} is necessary, as illustrated in the following example.
\begin{example}
Let $A=\kk[x]/(x-x^2)$ and $B=\kk[u,v,w,z]$. Take $I=(x) \subseteq A$ and $J=(u^5,u^4v,uv^4,v^5,u^3v^3,u^3v^2w+u^2v^3z)\subseteq B.$ Note that $\up{a}I^{(n)}=I^n=I^{n+1} =(x)$ for all $n \in \mathbb{N}.$ 
Also, $J$ is a $(u,v)$-primary ideal of $B$. By Example \ref{ex.symbAsat}, $$J^2=(u^{10},u^9v,u^8v^2,u^6v^4,u^5v^5,u^4v^6,u^2v^8,uv^9,v^{10},u^7v^3w,u^3v^7w,u^7v^3z,u^3v^7z),$$ 
and $J^s=(u,v)^{5s}$ for all $s \geq 3$. Therefore, $\Ass_B^*(J)=\{(u,v),(u,v,w,z)\}$. 

Take $L=(u,v,w,z)$. Then,  $w\in L$ is a regular element over $B/J$. Therefore, by Lemma \ref{lem.symbSat_ass},  $\up{a}J^{(2)} \neq J^2$ and $\up{a}J^{(3)}=J^3$. However, $\up{a}(I+J)^{(3)}=(I+J)^3 = (x) + J^3$, by Theorem \ref{thm.symbBIN}.
\end{example}

We end the paper by using Theorem \ref{binomial_expansion} to derive formulas for the depth and regularity of saturated powers of $(I+J)$, generalizing those given in \cite{HNTT} for symbolic powers $\up{m}(I+J)^{(s)}$ (and, thus, exhibiting that those formulas hold for $\up{a}(I+J)^{(s)}$ as well). For simplicity of notations, set $[1,s] = \{1, \dots, s\}$. 

For the remaining results, let $A=\kk[x_1,\ldots,x_d]$ and $B=\kk[y_1,\ldots,y_e]$ be standard graded polynomial rings over $\kk$. Let $I \subseteq A$ and $J \subseteq B$ be nonzero proper homogeneous ideals. 

\begin{theorem}
\label{thm_depth_reg_saturatedpowers}
Let $K\subseteq A$ and $L\subseteq B$ be homogeneous ideals. Assume that either of the following conditions holds:
\begin{enumerate}[\quad \rm (i)]
\item $\chara \kk=0$;
\item $I, K, J,L$ are monomial ideals.
\end{enumerate}
Then, for all $s\ge 1$, there are equality
\begin{align*}
\depth \dfrac{R}{(I+J)^{(s)}_{KL}} & =\min_{i\in [1,s]} \left\{\depth \dfrac{A}{I^{(i)}_K}+\depth \dfrac{B}{J^{(s-i)}_L}+1, \depth \dfrac{A}{I^{(i)}_K}+\depth \dfrac{B}{J^{(s+1-i)}_L}\right\},\\
\reg \dfrac{R}{(I+J)^{(s)}_{KL}} & =\max_{i\in [1,s]} \left\{\reg \dfrac{A}{I^{(i)}_K}+\reg \dfrac{B}{J^{(s-i)}_L}+1, \reg \dfrac{A}{I^{(i)}_K}+\reg \dfrac{B}{J^{(s+1-i)}_L}\right\}.
\end{align*}
\end{theorem}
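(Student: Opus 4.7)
The plan is to adapt the proof of the depth and regularity formulas for symbolic powers of $(I+J)$ given in \cite{HNTT}, with Theorem \ref{binomial_expansion} supplying the decomposition $(I+J)^{(s)}_{KL} = \sum_{i=0}^s I^{(i)}_K J^{(s-i)}_L$ in place of the binomial expansion for ordinary symbolic powers. After this input, the architecture of the argument from \cite{HNTT} carries over: one constructs a filtration of $(I+J)^{(s)}_{KL}$ whose successive subquotients factor as tensor products over $\kk$, invokes Künneth-type behavior of depth and regularity, and then assembles the formulas via standard short-exact-sequence manipulations.

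Concretely, for $0\le t\le s+1$ I would set $P_t = \sum_{i=t}^{s} I^{(i)}_K J^{(s-i)}_L$, so that $P_0 = (I+J)^{(s)}_{KL}$ and $P_{s+1}=0$, and consider the short exact sequences
$$0 \to P_t/P_{t+1} \to R/P_{t+1} \to R/P_t \to 0, \qquad t=0,1,\dots,s,$$
which control $\depth(R/P_0)$ and $\reg(R/P_0)$ recursively. To identify each subquotient, I combine Lemma \ref{lem_intersection} (which gives $I^{(i)}_K J^{(j)}_L = I^{(i)}_K \cap J^{(j)}_L$ in $R$) with the observation that $P_{t+1}\subseteq I^{(t+1)}_K$, since $\{I^{(i)}_K\}$ is a decreasing filtration. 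This yields $I^{(t)}_K J^{(s-t)}_L \cap P_{t+1} = I^{(t+1)}_K J^{(s-t)}_L$ for $t<s$, whence
$$P_t/P_{t+1} \cong \frac{I^{(t)}_K}{I^{(t+1)}_K} \otimes_\kk J^{(s-t)}_L \quad (0\le t<s), \qquad P_s \cong I^{(s)}_K \otimes_\kk B.$$

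Next I invoke the Künneth-type formulas $\depth_R(M\otimes_\kk N) = \depth_A M + \depth_B N$ (which holds unconditionally, via the fact that the Koszul complex of $R$ is the tensor product of those of $A$ and $B$) and $\reg_R(M\otimes_\kk N) = \reg_A M + \reg_B N$. The latter, combined with the tightness of the various inequalities that arise when propagating depth and regularity through short exact sequences, is the main technical step; the two hypotheses in the statement (characteristic zero, or all ideals monomial) are exactly those that ensure it holds with equality rather than merely as an estimate. To finish, I apply the short exact sequences $0\to I^{(i+1)}_K \to I^{(i)}_K\to I^{(i)}_K/I^{(i+1)}_K \to 0$ (and the $J$-analogue), together with the identities $\depth I = \depth A/I + 1$ and $\reg I = \reg A/I + 1$ for proper homogeneous ideals, to express $\depth(I^{(t)}_K/I^{(t+1)}_K)$ and $\reg(I^{(t)}_K/I^{(t+1)}_K)$ in terms of the invariants of $A/I^{(t)}_K$ and $A/I^{(t+1)}_K$. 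Feeding these into the Künneth formula for $P_t/P_{t+1}$ and propagating through the filtration-level sequences, the two types of summands inside the min/max over $i\in[1,s]$ emerge naturally: after reindexing $i:=t+1$, the $(s-i)$-terms account for one branch of the depth (resp. regularity) lemma applied to each short exact sequence, and the $(s+1-i)$-terms account for the other branch, while the boundary piece $P_s \cong I^{(s)}_K\otimes_\kk B$ turns out never to dominate the extremum since $J$ is a nonzero proper ideal. The bookkeeping is delicate but parallels \cite{HNTT} essentially line by line; the conceptual input beyond that paper is precisely Theorem \ref{binomial_expansion}.
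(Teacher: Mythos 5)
Your strategy is essentially what the paper does, but it has a genuine gap in the step where you attribute "tightness" of the inequalities to the two hypotheses without identifying the mechanism. Let me compare.

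The paper's proof is short: by Theorem \ref{binomial_expansion}, $(I+J)^{(s)}_{KL}=\sum_{i=0}^s I^{(i)}_K J^{(s-i)}_L$, and then the formulas follow by quoting \cite[Theorem 5.3]{HNTT}. That theorem is precisely the packaged version of the machinery you are unfolding: it takes two filtrations $\{I_i\}$ and $\{J_j\}$, forms the ideal $\sum I_i J_{s-i}$, runs the filtration $P_t$ and the subquotient identification $P_t/P_{t+1}\cong (I_t/I_{t+1})\otimes_\kk J_{s-t}$ exactly as you write, and outputs the min/max formulas. Your unfolding of the subquotients via Lemma \ref{lem_intersection} is correct and matches what sits inside that citation.

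The gap is this: \cite[Theorem 5.3]{HNTT} does \emph{not} hold for arbitrary filtrations. Its hypothesis is that both filtrations are \emph{Tor-vanishing}, i.e.\ the natural maps $\Tor^A_i(\kk, I_t)\to\Tor^A_i(\kk, I_{t-1})$ (and the $B$-analogues) are all zero. This is exactly what forces the long-exact-sequence estimates for depth and regularity to become equalities, which you correctly flag as "the main technical step" — but you leave it as an assertion that hypotheses (i)/(ii) "are exactly those that ensure it." In fact this needs a proof, and it is the one genuinely new ingredient beyond Theorem \ref{binomial_expansion}: the paper proves it as Lemma \ref{lem_Torvanishing}. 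The mechanism is a differential criterion: in characteristic $0$ one shows $\partial(I^{(s)}_K)\subseteq I^{(s-1)}_K$ (using the product rule, generalizing the symbolic-power case of \cite[Proposition 5.5]{HNTT}), and in the monomial case one shows $\partial^*(I^{(s)}_K)\subseteq I^{(s-1)}_K$; by the criteria of Ahangari Maleki \cite[Proposition 3.5]{AM} and Nguyen--Vu \cite[Proposition 4.4]{NgV}, such a differential inclusion implies the vanishing of the Tor maps. Your remark that "the conceptual input beyond that paper is precisely Theorem \ref{binomial_expansion}" therefore undersells it: extending the differential inclusion from symbolic to arbitrary saturated powers $I^{(s)}_K=I^s:K^\infty$ is a separate, nontrivial claim that your write-up neither states nor proves, and without it the passage from inequalities to equalities in the filtration argument does not close.

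A small secondary point: your parenthetical that the depth Künneth formula "holds unconditionally, via the fact that the Koszul complex of $R$ is the tensor product of those of $A$ and $B$" is fine for the subquotient computation, but the place where hypotheses (i)/(ii) actually enter is not the Künneth identification — it is the rigidity of the maps between consecutive filtration levels. Pinning that down is what Lemma \ref{lem_Torvanishing} is for.
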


The key step in the proof of Theorem \ref{thm_depth_reg_saturatedpowers} is given by the following lemma.

\begin{lemma}
\label{lem_Torvanishing} 
With the above notations, assume that either of the following conditions holds:
\begin{enumerate}[\quad \rm (i)]
\item $\chara \kk=0$;
\item $I, K$ are monomial ideals.
\end{enumerate}
If $\chara \kk = 0$ then denote by $\partial(I)=\left(\partial(f)/\partial x_i ~\big|~ f\in I, i=1,\ldots,d\right)$ the ideal generated by partial derivatives of elements in $I$. If $I$ is a monomial ideal then set
$$
\partial^*(I)=\left(f/x_i ~\big|~ f ~~ \text{is a monomial in $I$}, x_i ~~ \text{divides}~~ f\right).
$$ 
Let $s\ge 1$ be an integer. Then the following statements hold.
\begin{enumerate}[\quad \rm (1)]
\item If $\chara\kk=0$ then $\partial(I^{(s)}_K)\subseteq I^{(s-1)}_K$. If $I$ and $K$ are monomial ideals, then so is $I^{(s)}_K$ and $\partial^*(I^{(s)}_K)\subseteq I^{(s-1)}_K$.

\item In either case, for all $s\ge 1$ and all $i\in \ZZ$, the map $\Tor^A_i(\kk, I^{(s)}_K) \to \Tor^A_i(\kk, I^{(s-1)}_K)$ is zero.
\end{enumerate}
\end{lemma}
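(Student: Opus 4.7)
For part (1), my plan is a direct Leibniz calculation. Take $f\in I^{(s)}_K$, so that $fK^t\subseteq I^s$ for some $t\ge 1$; I aim to show $\partial_j f \in I^{s-1}:K^{t+1}\subseteq I^{(s-1)}_K$ (and analogously $f/x_j$ in the monomial case). The key auxiliary fact is $\partial_j(I^s)\subseteq I^{s-1}$, immediate from Leibniz on any generator $g_1\cdots g_s$ of $I^s$; its monomial counterpart is: if $x_j$ divides a monomial $m\in I^s$, then $m/x_j\in I^{s-1}$, shown by a quick case analysis on whether $x_j$ divides the ``$I^s$-generator'' or the ``free factor'' of $m$. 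Now, for any $g=k_1\cdots k_{t+1}$ with $k_i\in K$, one has $fg\in I^s$, hence $\partial_j(fg)\in I^{s-1}$, and Leibniz yields
\[
(\partial_jf)\,g=\partial_j(fg)-f\sum_{i=1}^{t+1}k_1\cdots\partial_j(k_i)\cdots k_{t+1}\in I^{s-1},
\]
because the ``other'' summands carry $f$ times a product of $t$ elements of $K$, which lies in $fK^t\subseteq I^s$. This gives part (1) in characteristic zero; the monomial case is even more direct, since $(fg)/x_j=(f/x_j)\cdot g$ is in $I^{s-1}$ straight from the monomial fact applied to $m=fg$.

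For part (2), my plan is to build an explicit chain null-homotopy on the Koszul complex $K_\bullet=K_\bullet(x_1,\ldots,x_d;A)$ of $\kk$, so that $\Tor^A_i(\kk,-)=H_i(K_\bullet\otimes-)$. In characteristic zero, set
\[
h:K_\bullet\otimes I^{(s)}_K\longrightarrow K_{\bullet+1}\otimes I^{(s-1)}_K,\qquad h(f\otimes e_I)=\sum_{j=1}^d\partial_jf\otimes(e_j\wedge e_I),
\]
well-defined by part (1). A careful computation unfolding Leibniz inside the Koszul differential and invoking Euler's identity $\sum_j x_j\partial_jf=(\deg f)\,f$ yields
\[
(dh+hd)(f\otimes e_I)=(\deg f+|I|)\,\bigl(\iota(f)\otimes e_I\bigr),
\]
where $\iota:I^{(s)}_K\hookrightarrow I^{(s-1)}_K$ is the inclusion. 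Assuming $I^{(s)}_K$ is a proper ideal, every nonzero homogeneous element of $K_\bullet\otimes I^{(s)}_K$ has strictly positive total degree, so dividing $h$ by this integer (valid in characteristic zero) exhibits $\iota\otimes 1$ as null-homotopic, forcing $\iota_*=0$ on every $\Tor^A_i(\kk,-)$.

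For the monomial case of part (2), since $\partial^*$ is not a genuine derivation, I would work in the $\ZZ^d$-multigrading of $K_\bullet\otimes M$. In each nonzero multidegree $\alpha\in\NN^d$, choose some $j_0$ with $\alpha_{j_0}>0$ and define, on the multigraded piece,
\[
h(x^{\alpha-e_I}\otimes e_I)=\begin{cases}(x^{\alpha-e_I}/x_{j_0})\otimes(e_{j_0}\wedge e_I),& j_0\notin I,\\ 0,& j_0\in I,\end{cases}
\]
whose target lies in $I^{(s-1)}_K$ thanks to part (1). A short sign check, splitting on whether $j_0\in I$ or not, gives $dh+hd=\iota\otimes 1$ on that summand directly (no rescaling needed), and hence $\iota_*=0$ on homology. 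The main technical obstacle is thus the bookkeeping of signs and coefficients in checking these chain-homotopy identities; the rest, including the reduction via part (1) and the multigraded splitting in the monomial case, is routine, with the degenerate case $I^{(s)}_K=A$ handled by the implicit standing assumption that $I^{(s)}_K$ is proper.
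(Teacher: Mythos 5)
Your part (1) is essentially the paper's argument (the paper cites the product-rule computation of \cite[Proposition 5.5]{HNTT} for the characteristic-zero half and gives the same direct monomial argument you give, using $fx_iK^m\subseteq I^s\Rightarrow fK^m\subseteq \partial^*(I^s)\subseteq I^{s-1}$). For part (2), however, your route is genuinely different: the paper treats the implication ``$\partial$-containment $\Rightarrow$ Tor-vanishing'' as a black box, citing the differential criteria of Ahangari Maleki \cite[Proposition 3.5]{AM} and Nguyen--Vu \cite[Proposition 4.4, Lemma 4.2]{NgV} as recorded in \cite[Lemmas 5.4 and 5.9]{HNTT}, whereas you reprove that black box from scratch by exhibiting explicit null-homotopies on $K_\bullet(x_1,\dots,x_d;A)\otimes I^{(s)}_K$. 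Your characteristic-zero homotopy $h(f\otimes e_I)=\sum_j\partial_jf\otimes e_j\wedge e_I$ is the de Rham operator, so that $dh+hd$ is Cartan's Lie derivative along the Euler field acting by multiplication by the total degree $\deg f+|I|$, which is $\ge 1$ on $K_\bullet\otimes I^{(s)}_K$ once $I^{(s)}_K$ is proper, and the rescaling by degree is legitimate because both $d$ and $h$ preserve total degree; your monomial homotopy is the standard multidegree-by-multidegree contraction showing the Koszul complex is exact away from multidegree $0$, landing in $I^{(s-1)}_K$ precisely by part (1). Both of these computations check out. What the paper's approach buys is brevity and a clean modular structure; what yours buys is self-containedness and transparency, making the role of the hypothesis $\partial(I^{(s)}_K)\subseteq I^{(s-1)}_K$ (resp. $\partial^*$) completely explicit. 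One caveat you correctly flag and which the paper leaves implicit as well: part (2) fails in homological degree $0$ if $I^{(s)}_K=A$ (the map on $\Tor_0$ is then the identity), so the argument tacitly assumes $I^{(s)}_K$ is proper; this holds in the intended applications (Lemmas \ref{lem_satpower_min_symbpower} and \ref{lem_satpower_ass_symbpower}) since there $K$ is not contained in any minimal prime of $I$, but it is worth noting.
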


\begin{proof}
(1) If $\chara \kk=0$ then arguing as in the proof of \cite[Proposition 5.5]{HNTT}, where the key point is the product rule for derivatives, we get $\partial(I^{(s)}_K)\subseteq I^{(s-1)}_K$. 

Assume that $I$ and $K$ are monomial ideals. Clearly $I^{(s)}_K=I^s:K^\infty$ is a monomial ideal. Take a monomial  $f\in \partial^*(I^{(s)}_K)$, we show that $f\in I^{(s-1)}_K$. By the definition, $fx_i \in I^{(s)}_K$ for some $1\le i\le d$. Thus for some $m\gg 0$, we have $fx_iK^m\subseteq I^s$. As $f$ is a monomial and $I,K$ are monomial ideals, this implies
\[
fK^m \subseteq \partial^*(I^s)\subseteq I^{s-1}.
\]
Hence $f\in I^{(s-1)}_K$, as claimed.

(2) The conclusion follows from the differential criteria for the vanishing of Tor maps given in \cite[Proposition 3.5]{AM} and \cite[Proposition 4.4 and Lemma 4.2]{NgV}, and recorded in \cite[Lemmas 5.4 and 5.9]{HNTT}.
\end{proof}

We are now ready to prove Theorem \ref{thm_depth_reg_saturatedpowers}.

\begin{proof}[Proof of Theorem \ref{thm_depth_reg_saturatedpowers}]
The proof follows closely to the arguments for \cite[Theorems 5.6 and 5.11]{HNTT}. Specifically, we consider the filtrations $\{I^{(i)}_K\}_{i\ge 0}$ in $A$ and $\{J^{(j)}_L\}_{j\ge 0}$ in $B$. By Lemma \ref{lem_Torvanishing}, these filtrations are Tor-vanishing in the sense that  $\Tor^A_i(\kk, I^{(s)}_K) \to \Tor^A_i(\kk, I^{(s-1)}_K)$ and $\Tor^B_i(\kk, J^{(s)}_L) \to \Tor^B_i(\kk, J^{(s-1)}_L)$ are zero maps for all $i$ and all $s\ge 1$.

On the other hand, by Theorem \ref{binomial_expansion},
\[
(I+J)^{(s)}_{KL}= \sum_{i=0}^s I^{(i)}_K J^{(s-i)}_L.
\]
Hence by \cite[Theorem 5.3]{HNTT}, the desired equality follow. Note that the original formulation of \textit{ibid.} yields formulas with slightly different index sets
\begin{align*}
\depth \dfrac{R}{(I+J)^{(s)}_{KL}} & =\min_{\substack{i\in [1,s-1] \\ j\in [1,s]}} \left\{\depth \dfrac{A}{I^{(s-i)}_K}+\depth \dfrac{B}{J^{(i)}_L}+1, \depth \dfrac{A}{I^{(s+1-j)}_K}+\depth \dfrac{B}{J^{(j)}_L}\right\},\\
\reg \dfrac{R}{(I+J)^{(s)}_{KL}} & =\max_{\substack{i\in [1,s-1] \\ j\in [1,s]}} \left\{\reg \dfrac{A}{I^{(s-i)}_K}+\reg \dfrac{B}{J^{(i)}_L}+1, \reg \dfrac{A}{I^{(s+1-j)}_K}+\reg \dfrac{B}{J^{(j)}_L}\right\},
\end{align*}
whereas our formulas read
\begin{align*}
\depth \dfrac{R}{(I+J)^{(s)}_{KL}} & =\min_{i\in [1,s]} \left\{\depth \dfrac{A}{I^{(i)}_K}+\depth \dfrac{B}{J^{(s-i)}_L}+1, \depth \dfrac{A}{I^{(i)}_K}+\depth \dfrac{B}{J^{(s+1-i)}_L}\right\},\\
\reg \dfrac{R}{(I+J)^{(s)}_{KL}} & =\max_{i\in [1,s]} \left\{\reg \dfrac{A}{I^{(i)}_K}+\reg \dfrac{B}{J^{(s-i)}_L}+1, \reg \dfrac{A}{I^{(i)}_K}+\reg \dfrac{B}{J^{(s+1-i)}_L}\right\}.
\end{align*}
However, both of these formulations agree, thanks to the standard convention that $\depth 0=+\infty$ and $\reg 0=-\infty$. We leave the details to the interested reader.
\end{proof}

Theorem \ref{thm_depth_reg_saturatedpowers} gives the following analogs of \cite[Theorems 5.6 and 5.11]{HNTT} for symbolic powers defined using all  associated primes.

\begin{corollary}
\label{cor_depth_reg_ass-symbpower}
Keep the notations of Theorem \ref{thm_depth_reg_saturatedpowers}. Assume that either of the following conditions holds:
\begin{enumerate}[\quad \rm (i)]
\item $\chara \kk=0$;
\item $I, J$ are monomial ideals.
\end{enumerate}
Then, for all $s\ge 1$, there are equality
\begin{align*}
\depth \dfrac{R}{\up{a}(I+J)^{(s)}} & =\min_{i\in [1,s]} \left\{\depth \dfrac{A}{\up{a}I^{(i)}}+\depth \dfrac{B}{\up{a}J^{(s-i)}}+1, \depth \dfrac{A}{\up{a}I^{(i)}}+\depth \dfrac{B}{\up{a}J^{(s+1-i)}}\right\},\\
\reg \dfrac{R}{\up{a}(I+J)^{(s)}} & =\max_{i\in [1,s]} \left\{\reg \dfrac{A}{\up{a}I^{(i)}}+\reg \dfrac{B}{\up{a}J^{(s-i)}}+1, \reg \dfrac{A}{\up{a}I^{(i)}}+\reg \dfrac{B}{\up{a}J^{(s+1-i)}}\right\}.
\end{align*}
\end{corollary}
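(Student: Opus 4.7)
The plan is to recognize the ``associated'' symbolic powers as saturated powers of the form treated by Theorem \ref{thm_depth_reg_saturatedpowers}, and then specialize that theorem. Concretely, I would define
\[
K = \bigcap_{\substack{\pp \in \Ass_A^*(I) \\ \grade(\pp,A/I) \ge 1}} \pp \quad\text{and}\quad L = \bigcap_{\substack{\qq \in \Ass_B^*(J) \\ \grade(\qq,B/J) \ge 1}} \qq,
\]
using the finiteness of $\Ass_A^*(I)$ and $\Ass_B^*(J)$. Then Lemma \ref{lem.symbSat_ass} applied to $I$ and to $J$ gives $\up{a}I^{(i)} = I^{(i)}_K$ and $\up{a}J^{(j)} = J^{(j)}_L$ for every $i,j\ge 0$, while Lemma \ref{lem_satpower_ass_symbpower} gives $\up{a}(I+J)^{(s)} = (I+J)^{(s)}_{KL}$ for every $s\ge 1$. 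Hence the desired equalities will follow immediately once Theorem \ref{thm_depth_reg_saturatedpowers} is applied to this particular choice of $K$ and $L$.

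The only point that needs verification is that the hypotheses of Theorem \ref{thm_depth_reg_saturatedpowers} are satisfied. In case~(i), when $\chara \kk = 0$, there is nothing to check. In case~(ii), where $I$ and $J$ are monomial, I would verify that $K$ and $L$ are also monomial. This is where the one genuine technical step lies, but it is short: associated primes of powers of monomial ideals are themselves monomial primes (i.e., generated by subsets of the variables), so each $\pp \in \Ass_A^*(I)$ is a monomial prime, and the intersection defining $K$ is a finite intersection of monomial ideals, hence monomial. The same applies to $L$, so $I, K, J, L$ are all monomial and condition (ii) of Theorem \ref{thm_depth_reg_saturatedpowers} holds.

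With the hypotheses verified, the depth and regularity formulas of Theorem \ref{thm_depth_reg_saturatedpowers} translate term by term under the identifications above, yielding exactly
\begin{align*}
\depth \dfrac{R}{\up{a}(I+J)^{(s)}} & =\min_{i\in [1,s]} \left\{\depth \dfrac{A}{\up{a}I^{(i)}}+\depth \dfrac{B}{\up{a}J^{(s-i)}}+1, \depth \dfrac{A}{\up{a}I^{(i)}}+\depth \dfrac{B}{\up{a}J^{(s+1-i)}}\right\}, \\
\reg \dfrac{R}{\up{a}(I+J)^{(s)}} & =\max_{i\in [1,s]} \left\{\reg \dfrac{A}{\up{a}I^{(i)}}+\reg \dfrac{B}{\up{a}J^{(s-i)}}+1, \reg \dfrac{A}{\up{a}I^{(i)}}+\reg \dfrac{B}{\up{a}J^{(s+1-i)}}\right\}.
\end{align*}
The only subtlety worth flagging is the monomiality of $K$ and $L$; the rest of the argument is a direct reduction to Theorem \ref{thm_depth_reg_saturatedpowers} via Lemmas \ref{lem.symbSat_ass} and \ref{lem_satpower_ass_symbpower}.
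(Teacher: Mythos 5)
Your proposal is correct and follows exactly the same route as the paper: realize the ``associated'' symbolic powers as saturated powers via Lemma \ref{lem.symbSat_ass} and Lemma \ref{lem_satpower_ass_symbpower} with the same choice of $K$ and $L$, verify the monomiality of $K,L$ in case (ii), and then quote Theorem \ref{thm_depth_reg_saturatedpowers}. One tiny point you gloss over: in case (i) you also need $K$ and $L$ to be \emph{homogeneous} (a hypothesis of Theorem \ref{thm_depth_reg_saturatedpowers}); this is immediate, since associated primes of powers of homogeneous ideals are homogeneous, and the paper does flag it.
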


\begin{proof} Observe that, by Lemma \ref{lem.symbSat_ass} and setting $K=\mathop{\bigcap\limits_{\pp \in \Ass_A^*(I)}}\limits_{\grade(\pp, A/I) \ge 1}\pp$, we have
\[
\up{a}I^{(s)}=I^s:K^\infty \quad \text{for all $s\ge 1$}.
\]
In particular, if $I$ is homogeneous then so is $K$. If $I$ is a monomial ideal then so are all the associated primes of its powers and, hence, so is $K$.

The desired conclusions follows from combining Lemma \ref{lem_satpower_ass_symbpower} and Theorems \ref{binomial_expansion} and \ref{thm_depth_reg_saturatedpowers}. 
\end{proof}

\end{document}